\DeclareMathAlphabet{\pazocal}{OMS}{zplm}{m}{n}
\newcommand{\Lb}{\pazocal{L}}
\newtheorem{mydef}{Definition}[section]
\newtheorem{myex}[mydef]{Example}
\newtheorem{myrem}[mydef]{Remark}
\newtheorem{mylem}[mydef]{Lemma}
\newtheorem{myprop}[mydef]{Proposition}
\newtheorem{mytheo}[mydef]{Theorem}
\newtheorem{mycor}[mydef]{Corollary}
\date{}
\begin{document}
\title{On the characterization of Riemannian warped product Einstein metrics}
\author{Sayed Mohammad Reza Hashemi}

\maketitle

ABSTRACT. In \cite{HPW} C. He, P. Petersen and W. Wylie investigate warped product Einstein spaces using the concept of $(\lambda,m + n)$-Einstein metrics in the Riemannian setting. The authors in \cite{HPW} extend the works of \cite{CJ}, \cite{Catino} and that of \cite{KK} on $(\lambda,n+m)$-Einstein manifolds with boundary. $(\lambda,n+m)$-Einstein structure  is globally characterized in \cite{HPW}, but it is not correctly stated. In this paper I  present a series of results, including local characterizations, which are necessary for the restatement of the main result in \cite{HPW}. Finally, I restate their global characterization, i.e. \cite[Theorem 7.2]{HPW}.

\tableofcontents
\section{Introduction}
A Riemannian manifold $(M,g)$ which has no boundary point is said to be geodesically complete provided every maximal geodesic $\gamma$: $I\rightarrow M$ is defined on 
the whole $\mathbb{R}$, i.e. $I=\mathbb{R}$. Moreover a Riemannian manifold with non-empty boundary is said to be geodesically complete provided every maximal geodesic $\gamma:$ $I\rightarrow M$ maps each finite end point of $I$ to a boundary point of $\partial M$, i.e. every maximal geodesic $\gamma:$ $I\rightarrow M$ satisfies in one of the following conditions:\\
1) $\gamma:$ $[a,\infty) \rightarrow M$ then $\gamma(a)\in\partial M$\\
2) $\gamma:$ $(-\infty,b] \rightarrow M$ then $\gamma(b)\in\partial M$\\
3) $\gamma:$ $[a,b]\rightarrow M$ then $\gamma(a)$, $\gamma(b)\in\partial M$\\
4) $\gamma:$ $(-\infty,\infty) \rightarrow M$ then $\gamma(-\infty,\infty)\cap\partial M=\emptyset$

 By completeness, we mean geodesically completeness unless we specifically mention some other sense of completeness.
\begin{mydef}{$($\cite{HPW}$)$}.
\label{definition hess}
A ($\lambda$,n+m)-Einstein manifold $(M,g,f)$ is a complete Riemannian manifold $(M^n,g)$ which may have boundary together with a smooth function $f$ (here smooth means of 
class $C^2$) on $M$ satisfying
\end{mydef}
\begin{align}
 Hess f&=\frac{f}{m}(Ric-\lambda g) \label{hess} \\
 f&>0 \text{ on int}(M)\nonumber  \\
 f&=0 \text{ on }\partial M. \nonumber
\end{align}
If $m=1$ we additionally assume that $\Delta f=-\lambda f$. A $(\lambda,n+m)$-Einstein manifold $(M,g,f)$ itself is not necessarily Einstein. A simple case of $(\lambda,n+m)$-Eintstein manifolds happens when $f$ is constant. Then from Equation \eqref{hess} one obtains $Ric = \lambda g$. Moreover as $f$ is constant it can not be identically zero, hence $\partial M=\emptyset$. In \cite{HPW} this case is called a $\lambda$-Einstein manifold or a trivial $(\lambda,n+m)$-Einstein manifold.

Reminder: For the case $m=1$ see \cite{C}. In this paper and in particular in the main results, i.e. theorems \ref{improved 7.1}, \ref{aroundcritical} and \ref{Petersen22.2} we focus on $m>1$. For a Riemannian manifold $M$ without boundary compare \cite{KK}.

 Our motivation to study $(\lambda,n+m)$-Einstein manifolds is due to \cite[Proposition 5]{KK} from which it follows:
\begin{myprop}{$($\cite{HPW}, Proposition 1.1.$)$}.
\label{motivation21}
 Suppose $\lambda\in\mathbb{R}$,  $m>1$ is an integer, $(M,g)$ is a (geodesically) complete Riemannian manifold of dimension $n$ and $f\in C^\infty(M)$ is non-negative. Then $(M,g,f)$ is a $(\lambda,n+m)$-Einstein 
 manifold if and only if there is a smooth $n+m$ dimensional Riemannian warped product Einstein metric $g_E$ on $E=M^n\times F^m$ with Einstein constant $\lambda$ of the form
 $$g_E=g+f^2g_{F^m}$$
for a Riemannian Einstein manifold $(F^m,g_F)$ of dimension $m$ satisfying $Ric_F=\mu g_F$ where $\mu$ satisfies 
\begin{align}
\label{mue}
\mu=f\Delta f+(m-1)\lvert \nabla f\lvert^2+\lambda f^2
\end{align}
\end{myprop}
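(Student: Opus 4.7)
The argument is almost entirely a local computation using the standard warped product Ricci formulas, so I would begin by recording those. For a warped product metric $g_E = g + f^2 g_F$ on $M \times F$ with $f>0$ and $\dim F = m$, separating horizontal vectors $X,Y$ (tangent to $M$) and vertical vectors $V,W$ (tangent to $F$), one has (cf.\ O'Neill, Ch.~7)
\begin{align*}
Ric_E(X,Y) &= Ric_M(X,Y) - \frac{m}{f}\,Hess\,f(X,Y), \\
Ric_E(X,V) &= 0, \\
Ric_E(V,W) &= Ric_F(V,W) - \bigl(f\Delta f + (m-1)|\nabla f|^2\bigr)\,g_F(V,W).
\end{align*}

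For the $(\Leftarrow)$ direction I would assume $g_E$ is Einstein with constant $\lambda$. The horizontal-horizontal block immediately yields $Hess\,f = (f/m)(Ric-\lambda g)$, i.e.\ equation \eqref{hess}. The vertical-vertical block forces $Ric_F = \mu\, g_F$ with $\mu$ given by \eqref{mue}; the left-hand side is $F$-intrinsic while the right-hand side is a priori a function on $M$, so consistency of the identity across all fibres already guarantees that $\mu$ must be a single constant. Positivity of $f$ on $\text{int}(M)$ is needed for $g_E$ to be a genuine Riemannian metric, and completeness of $M$ is inherited from that of $E$.

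For the $(\Rightarrow)$ direction I would first establish the constancy of $\mu = f\Delta f + (m-1)|\nabla f|^2 + \lambda f^2$ on $M$. The plan is to differentiate $\mu$ in an arbitrary direction, substitute the Hessian identity \eqref{hess} to convert derivatives of $Hess\,f$ into Ricci terms, and then apply the twice-contracted second Bianchi identity $div(Ric) = \tfrac{1}{2}d\,scal$ to cancel the remaining gradient terms; this is the calculation underlying \cite[Proposition 5]{KK}. With $\mu \in \mathbb{R}$ in hand, I would choose any complete Einstein manifold $(F^m, g_F)$ of Einstein constant $\mu$ (a round sphere, flat $\mathbb{R}^m$, or hyperbolic space of suitable curvature depending on the sign of $\mu$), form $E = M \times F$ with the warped metric $g_E = g + f^2 g_F$, and read the three Ricci formulas above in reverse to conclude $Ric_E = \lambda g_E$.

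The main obstacle is proving that $\mu$ is constant: this is the only step that is not a formal manipulation of the warped product formulas, and it is where the differential content of the $(\lambda,n+m)$-Einstein condition is actually used. A second, more subtle point, which appears to be what the present paper sets out to repair, concerns completeness and the behaviour of $g_E$ across $\partial M \times F$: when $\partial M \neq \emptyset$ the warping factor $f$ vanishes on the boundary, so $g_E$ degenerates there, and extra care is required to state the global correspondence precisely.
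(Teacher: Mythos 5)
First, a point of comparison: the paper does not prove this proposition at all --- it is quoted verbatim from \cite{HPW} (Proposition 1.1 there), which in turn rests on \cite[Proposition 5]{KK} --- so there is no in-paper argument to measure you against. Judged on its own, your outline follows the standard route: O'Neill's warped-product Ricci formulas, block decomposition into horizontal, mixed and vertical parts, and matching each block against $Ric_E=\lambda g_E$. The three curvature formulas you quote are correct, the $(\Leftarrow)$ direction is essentially complete (including the observation that the vertical block forces $\mu$ to be a single constant because $Ric_F$ is $F$-intrinsic while $\mu$ is a function on $M$), and you correctly flag the degeneracy of $g_E$ along $\partial M\times F$ where $f$ vanishes, which is indeed the delicate point in the global statement.

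The gap is in the $(\Rightarrow)$ direction: the constancy of $\mu=f\Delta f+(m-1)\lvert\nabla f\rvert^2+\lambda f^2$ is announced as a plan but not carried out, and it is the only step of the whole proposition with genuine analytic content. You name the right tools --- take the divergence of \eqref{hess}, use $\mathrm{div}(Hess\,f)=d(\Delta f)+Ric(\nabla f,\cdot)$ and $\mathrm{div}(Ric)=\tfrac12\,d\,Scal$, and combine with the traced form $\Delta f=\tfrac{f}{m}(Scal-n\lambda)$ of \eqref{hess} to show $d\mu=0$ --- but until that cancellation is actually exhibited the proof is not self-contained; as written it defers precisely the step that makes the equivalence nontrivial to \cite{KK}. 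A secondary loose end: having obtained $\mu\in\mathbb{R}$, you must still justify that a complete Einstein $(F^m,g_F)$ with $Ric_F=\mu g_F$ exists for the given $m>1$ and the computed value of $\mu$ (your space-form examples suffice, but for $m=2$ with $\mu\neq0$ one should say which surface is meant), and note that completeness of $g_E$ requires an argument when $\partial M\neq\emptyset$, not merely when $M$ is boundaryless.
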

In the definition of a $(\lambda,n+m)$-Einstein manifold $(M,g,f)$ for the case $m=1$, the additional condition $\Delta f=-\lambda f$ is equivalent to $\mu=0$. This is necessary for the existence of $F$ because in one dimensional manifolds we must have $Ric\equiv 0$. 

If we define $u\in C^\infty(M)$ via $e^{-\frac{u}{m}}=f$ on the interior of a $(\lambda,n+m)$-Einstein manifold $(M,g,f)$, Equation \eqref{hess} takes the form
\begin{align}
 Ric_u^m=Ric+Hess u-\frac{du\otimes du}{m}=\lambda g,\label{emery}
\end{align}
$Ric_u^m$ is sometimes called the m-Bakry-Emery tensor. The so-called m-quasi-Einstein manifolds were introduced by J. Case, Y-J. Shu and G. Wei in \cite{CJ} as a triple $(M,g,u)$ consisting of a Riemannian manifold $(M,g)$ and 
a smooth function $u$ as above which satisfies \eqref{emery} for $0<m\leq\infty$ and $\lambda\in\mathbb{R}$. By the results of \cite{KK}, \cite[Theorem 2.2]{CJ} characterizes m-quasi Einstein manifolds $(M,g,u)$ as the base manifold of an Einstein warped product for which $u$ is the warping function. Additionally through \cite[Proposition 3.6]{CJ}, they proved a rigidity property for scalar curvature,  Scal, by giving lower and upper bounds respecting the sign of $\lambda$. 

A bit later, Catino in \cite{Catino} considered the following extended form of the equation \eqref{emery}
\begin{align}
\mathrm{Ric}+\mathrm{Hess}u-\varsigma du\otimes du=\lambda g\label{quasi2p}
\end{align}
for the so-called generalized quasi-Einstein manifolds. Here $u,\varsigma,\lambda$ are three smooth functions on a complete Riemannian manifold $(M^n,g)$, $n\geq3$.
Equation \eqref{quasi2p} gives out Einstein condition when $u$ and $\lambda$ are constant, and a quasi-Einstein manifold when $\varsigma$ and $\lambda$ are constant. Catino proves the following.
\begin{myprop}{$($\cite{Catino}, Theorem 1.1.$)$}.
Let $(M^n,g)$, $n\geq3$, be a generalized quasi-Einstein manifold with harmonic Weyl tensor and $W(\nabla u,.,.,.)=0$. Then around any regular point of $u$ the 
manifold $(M^n,g)$ is locally a warped product with $n-1$ dimensional Einstein fibres.
\end{myprop}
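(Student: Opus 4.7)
The plan is to reduce the statement to showing that around a regular point of $u$, the Hessian of $u$ takes the form $\mathrm{Hess}\,u = \phi\, g + \psi\, du \otimes du$ for scalar functions $\phi,\psi$, since such a structure is classically equivalent to a local warped-product decomposition $g = dr^2 + \varphi(r)^2 h$ over an interval with fiber equal to a level set of $u$, and the ambient Ricci decomposition of a warped product will then force $h$ to be Einstein. Equivalently, it suffices to show that $\mathrm{Ric}$ has the analogous form $\mathrm{Ric} = a\, g + b\, du \otimes du$, because then the generalized quasi-Einstein equation \eqref{quasi2p} immediately transfers the same form to $\mathrm{Hess}\,u$.

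The first step is to convert the curvature assumptions into algebraic constraints on $\mathrm{Ric}$. For $n\geq 4$, harmonic Weyl is equivalent to the vanishing of the Cotton tensor
$C_{ijk}=\nabla_i R_{jk}-\nabla_j R_{ik}-\frac{1}{2(n-1)}\bigl(g_{jk}\nabla_i R - g_{ik}\nabla_j R\bigr)$.
I would differentiate \eqref{quasi2p} once, antisymmetrize in two indices, and invoke the Ricci identity $\nabla_k\nabla_i u_j - \nabla_i \nabla_k u_j = R_{kijp}\nabla^p u$; together with $C_{ijk}=0$ this produces an identity schematically of the form
\[
\frac{1}{2(n-1)}\bigl(g_{ij}\nabla_k R - g_{kj}\nabla_i R\bigr) = \Phi_{kij} - R_{kijp}\nabla^p u ,
\]
where $\Phi_{kij}$ collects terms in $\lambda,\varsigma,du,\mathrm{Hess}\,u$. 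Decomposing $R_{kijp}$ into Weyl plus Schouten parts, the Weyl contribution contracted against $\nabla^p u$ vanishes by the assumption $W(\nabla u,\cdot,\cdot,\cdot)=0$, leaving only Ricci/scalar terms. Contracting this relation with $\nabla^j u$ isolates $\mathrm{Ric}(\nabla u)$ modulo terms parallel to $\nabla u$, and should yield that $\nabla u$ is a Ricci eigenvector at every regular point of $u$.

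Once $\nabla u$ is a Ricci eigenvector, the next move is to show that on $(\nabla u)^{\perp}$ the tensor $\mathrm{Ric}$ acts as a scalar. This is again driven by $W(\nabla u,\cdot,\cdot,\cdot)=0$, now used on two-planes tangent to the level sets, combined with a further contraction of the identity above. This gives $\mathrm{Ric}=a\,g+b\,du\otimes du$ and hence $\mathrm{Hess}\,u=(\lambda-a)g+(\varsigma-b)du\otimes du$. In particular $\mathrm{Hess}\,u(\nabla u,X)=\tfrac{1}{2}X(|\nabla u|^2)=0$ for $X\perp\nabla u$, so $|\nabla u|$ is constant on level sets of $u$; parametrizing by arclength $r$ along integral curves of $\nabla u/|\nabla u|$ then splits the metric as $dr^2+\varphi(r)^2 h$, with $\varphi$ read off from the pure-trace part of $\mathrm{Hess}\,u$. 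The warped-product Ricci formula applied to the now-established relation $\mathrm{Ric}|_{\text{fiber}} = a\,g|_{\text{fiber}}$ produces the Einstein condition on $h$.

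The principal obstacle is the algebraic heart of the argument: squeezing from a single antisymmetrized divergence identity, after Cotton$=0$ and $W(\nabla u,\cdot,\cdot,\cdot)=0$ are imposed, the \emph{two} conclusions that $\nabla u$ is a Ricci eigenvector \emph{and} that $\mathrm{Ric}$ is scalar on $(\nabla u)^{\perp}$. The first follows fairly directly from one contraction; the second requires genuinely using the Weyl-vanishing on fiber-tangent planes (or a carefully chosen second trace), and this is where the computation will live or die. A separate comment is needed for $n=3$, where $W\equiv 0$ so the Weyl hypotheses are vacuous and the role of harmonic Weyl must be replaced by direct vanishing of the Cotton tensor; the same scheme then goes through with only the Schouten part of $R_{kijp}$ participating.
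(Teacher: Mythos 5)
The paper does not actually prove this proposition: it is quoted from \cite{Catino} as background, so there is no in-paper proof to compare against. Your outline coincides with the strategy of Catino's original argument and with its analogue for $(\lambda,n+m)$-Einstein metrics that the paper does reproduce (Lemma \ref{ev2} and Theorem \ref{7.222}): differentiate the structure equation, antisymmetrize and invoke the Ricci identity, use vanishing of the Cotton tensor together with $W(\nabla u,\cdot,\cdot,\cdot)=0$ to reduce $R_{kijp}\nabla^p u$ to Schouten terms, conclude that the Ricci tensor has at most two eigenvalues with $\nabla u$ as an eigenvector, and then split the metric along the (totally umbilic) level sets of $u$. So the approach is correct and essentially the standard one.
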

G. Catino, C. Mantegazza, L. Mazzieri and M. Rimoldi in \cite{Catino1} also consider Equation \eqref{quasi2p} where $\varsigma,\lambda\in\mathbb{R}$. They prove the following result for an arbitrary $\varsigma\in\mathbb{R}$.
\begin{myprop}{$($\cite{Catino1}, Theorem 1.1.$)$}.
Let $(M^n,g)$, $n\geq3$, be a complete locally conformally flat quasi Einstein manifold. Then the following hold:\\

(i) If $\varsigma=\frac{1}{2-n}$, then $(M^n,g)$ is globally conformally equivalent to a space form.\\

(ii) If $\varsigma\neq\frac{1}{2-n}$, then around any regular point of $u$, the manifold $(M^n,g)$ is locally a warped product with $n-1$ dimensional fibres of constant sectional curvature.
\end{myprop}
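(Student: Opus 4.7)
The plan is to handle both parts by exploiting the fact that the LCF hypothesis gives vanishing Weyl tensor in dimension $n\geq 4$ and vanishing Cotton tensor in every dimension $n\geq 3$, while the quasi-Einstein equation
$$\mathrm{Ric}+\mathrm{Hess}\,u-\varsigma\,du\otimes du=\lambda g$$
provides the algebraic link between curvature and the function $u$. The dichotomy between (i) and (ii) is controlled by whether $\varsigma$ equals the conformally natural value $1/(2-n)$ or not.

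For part (i) the key observation is that $\varsigma=1/(2-n)$ is precisely the coefficient for which the equation becomes a conformally invariant statement. Concretely, I would introduce $\tilde g=e^{-2u/(n-2)}g$ and use the standard conformal transformation of the Ricci tensor to obtain
$$\widetilde{\mathrm{Ric}}=\mathrm{Ric}+\mathrm{Hess}\,u+\frac{1}{n-2}\,du\otimes du+\frac{\Delta u-|\nabla u|^{2}}{n-2}\,g.$$
The first three terms on the right sum to $\lambda g$ by the hypothesis, leaving $\widetilde{\mathrm{Ric}}=\tilde H\,\tilde g$ pointwise for an appropriate smooth function $\tilde H$. Since $n\geq 3$, Schur's lemma applied to $\tilde g$ forces $\tilde H$ to be constant, so $\tilde g$ is Einstein. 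Local conformal flatness being conformally invariant, $\tilde g$ is simultaneously Einstein and LCF, which in dimension $n\geq 3$ forces constant sectional curvature, so $\tilde g$ is a space form metric. The global conformal equivalence of $(M,g)$ with a model space form then follows from completeness of $g$ combined with the Killing--Hopf classification applied to $(M,\tilde g)$ or to its universal cover.

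For part (ii) I would not rederive the warped product decomposition from scratch but rather invoke the preceding proposition (Theorem 1.1 of \cite{Catino}), whose hypotheses---harmonic Weyl tensor and $W(\nabla u,\cdot,\cdot,\cdot)=0$---follow immediately from the identical vanishing of $W$ in dimension $n\geq 4$ (and from Cotton flatness in dimension $3$). That result produces, around any regular point of $u$, a local warped product $I\times_{\varphi}F^{n-1}$ with $(F,g_F)$ Einstein. The remaining task is to upgrade ``Einstein'' on the fibre to ``constant sectional curvature''. This would be done by computing the Weyl tensor of a warped product with one-dimensional base: its tangential component is proportional to the intrinsic Weyl tensor of $F$, so the LCF hypothesis on the total space forces $W_F\equiv 0$, and an LCF Einstein fibre of dimension $n-1\geq 2$ has constant sectional curvature. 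The condition $\varsigma\neq 1/(2-n)$ is what rules out the purely conformal picture of (i) and ensures the warping function $\varphi$ is genuinely non-constant.

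The main obstacle I anticipate is the globalization in (i): passing from ``$\tilde g$ is locally a space form'' to ``$(M,g)$ is globally conformal to a complete model space form'' requires controlling the conformal factor $e^{-2u/(n-2)}$ well enough to transfer completeness, and the behaviour of $u$ near critical points or on the ends of $M$ must be handled by a separate argument. In (ii), the dimension $3$ case needs a Cotton-based variant since $W$ vanishes identically, and the fibre-curvature computation has to exploit the specific warped-product Ricci identities tying $\varphi$ to the intrinsic geometry of $F$.
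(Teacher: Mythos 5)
This proposition is quoted in the paper from \cite{Catino1} without proof, so there is no in-paper argument to compare your attempt against; I can only assess the proposal on its own terms. Your outline follows what is essentially the standard (and the original) strategy: for (i), the conformal change $\tilde g=e^{-2u/(n-2)}g$ turns the quasi-Einstein equation with $\varsigma=\frac{1}{2-n}$ into the statement that $\widetilde{\mathrm{Ric}}$ is pointwise proportional to $\tilde g$, Schur's lemma then gives Einstein, and Einstein together with local conformal flatness gives constant curvature; for (ii), local conformal flatness supplies the hypotheses of the preceding proposition (harmonic Weyl tensor and $W(\nabla u,\cdot,\cdot,\cdot)=0$, both holding trivially since $W\equiv0$ for $n\geq4$ and the Cotton tensor vanishes for $n=3$), and one then upgrades the Einstein fibre to constant sectional curvature. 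One small point in (i): your displayed identity shows $\widetilde{\mathrm{Ric}}$ proportional to $g$, not to $\tilde g$; the two differ by the positive factor $e^{2u/(n-2)}$, so the conclusion $\widetilde{\mathrm{Ric}}=\tilde H\tilde g$ still holds, but the factor should be tracked before invoking Schur.

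Two steps need more care than your sketch provides. First, the globalization in (i): $\tilde g$ is in general not complete, so Killing--Hopf cannot be applied to $(M,\tilde g)$ directly; passing from ``Einstein of constant curvature after a conformal change'' to ``globally conformally equivalent to a space form'' requires a developing-map (Kuiper-type) argument on the universal cover combined with completeness of $g$. You correctly flag this as the main obstacle, but it is the substantive content of (i) and cannot be waved through. Second, in (ii) the claim that the tangential component of $W$ is proportional to the intrinsic Weyl tensor of the fibre is vacuous when $n-1\leq3$ (the fibre's Weyl tensor vanishes identically there), so for $n=3,4$ it does not by itself yield constant sectional curvature of the fibre. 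The clean route, valid in all fibre dimensions at once, is the standard characterization that a warped product $dt^2+\varphi^2(t)g_F$ is locally conformally flat if and only if $(F,g_F)$ has constant sectional curvature. With these two repairs the proposal is a correct reconstruction of the cited result.
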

In a series of papers \cite{HPW}, \cite{HPW4}, \cite{HPW5} C. He, P. Petersen and W. Wylie investigate warped product Einstein spaces using the concept of $(\lambda,m + n)$-Einstein metrics in the Riemannian case. These metrics can also be seen as generalizations of gradient Ricci solitons, which are invariant under the Ricci flow. Our work is mostly based on \cite{HPW} where the authors extend the works of \cite{CJ}, \cite{Catino} and that of \cite{KK} on $(\lambda,n+m)$-Einstein manifolds with boundary. In \cite{HPW} setting new quantities 
$\rho:=\frac{1}{m-1}((n-1)\lambda-Scal)$ and $P=Ric-\rho g$, the authors get control on the number of corresponding eigenvalues of the tensor P with respect to $\nabla f$ (hence for the Schouten tensor and Hess $f$):
\begin{mylem}{$($\cite{HPW}, Lemma $7.1)$}.
\label{ev2}
Let $(M,g,f)$ be a Riemannian
$(\lambda,n+m)$-Einstein manifold with harmonic Weyl tensor and $W(\nabla f,Y,Z,\nabla f)=0$. Then at a point $p$ where $\nabla f\neq0$, the tensor $P$ (hence Ricci and Schouten tensors) has at most two eigenvalues. If it has two eigenvalues then one has multiplicity $1$ with eigenvector $\nabla f$, 
and the other one has multiplicity $n-1$ with vectors orthogonal to $\nabla f$. If it has one eigenvalue then $(M,g)$ is Einstein.
\end{mylem}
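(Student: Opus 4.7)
The plan is to derive an algebraic constraint on the tensor $P$ by combining the $(\lambda,n+m)$-Einstein equation with a computation of the Cotton tensor, and then to extract the eigenvalue structure from that constraint.

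First I would rewrite \eqref{hess} as
\[
R_{jk}=\lambda g_{jk}+\frac{m}{f}\nabla_j\nabla_k f,
\]
take the covariant derivative $\nabla_i$ of both sides, and antisymmetrize in the pair $(i,j)$. The commutation formula $\nabla_i\nabla_j\nabla_k f-\nabla_j\nabla_i\nabla_k f=-R_{ijkl}\nabla^l f$ turns the third-derivative-of-$f$ term into a Riemann-times-$\nabla f$ contribution, while the remaining pointwise pieces are re-expressed via \eqref{hess} itself. Tracing in $(j,k)$ gives $\mathrm{Scal}=n\lambda+\frac{m}{f}\Delta f$, which controls $\nabla\mathrm{Scal}$ and lets me subtract $\frac{1}{2(n-1)}(g_{jk}\nabla_i\mathrm{Scal}-g_{ik}\nabla_j\mathrm{Scal})$ to expose the Cotton tensor $C_{ijk}$. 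Decomposing $R_{ijkl}$ into its Weyl and Schouten parts splits $R_{ijkl}\nabla^l f$ into a $W_{ijkl}\nabla^l f$ piece plus a Schouten piece which, because $\rho$ absorbs the trace of $\mathrm{Ric}$, reorganizes in terms of $P$. The outcome is an identity of the schematic form
\[
C_{ijk}=-\frac{m}{f}\,W_{ijkl}\nabla^l f+\frac{\alpha}{f}\bigl(f_i P_{jk}-f_j P_{ik}\bigr)+\frac{\beta}{f}\bigl(g_{ik}(P\nabla f)_j-g_{jk}(P\nabla f)_i\bigr),
\]
with $\alpha,\beta$ explicit constants depending only on $n$ and $m$ (nonzero because $m>1$) and $(P\nabla f)_i:=P_{il}\nabla^l f$.

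Next I would invoke the hypotheses. For $n\ge 4$ the harmonic Weyl assumption is equivalent to $C_{ijk}\equiv 0$ (in $n=3$ one uses $W\equiv 0$ and the Cotton identity directly). Setting the left-hand side to zero and then contracting with $\nabla^{i}f$, the Weyl term drops out by the hypothesis $W(\nabla f,Y,Z,\nabla f)=0$, leaving a purely algebraic relation between $P$, $\nabla f$, and $P\nabla f$ at $p$. Splitting this relation into its symmetric and antisymmetric parts in the remaining free indices delivers two conclusions at once: $\nabla f$ is an eigenvector of $P$, so $P\nabla f=\mu_{1}\nabla f$ for some scalar $\mu_{1}$; and the restriction of $P$ to the orthogonal complement $(\nabla f)^{\perp}$ is a scalar multiple of the induced metric there, with eigenvalue $\mu_{2}$. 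Since the coefficient of $f_i P_{jk}-f_j P_{ik}$ is nonzero thanks to $m>1$, these statements are forced, yielding exactly the stated multiplicity-$1$/multiplicity-$(n-1)$ decomposition.

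For the ``one eigenvalue'' case, if $\mu_{1}=\mu_{2}$ on the open set $\{\nabla f\ne 0\}$, then $P=\mu g$ there, so $\mathrm{Ric}=(\rho+\mu)g$ is pointwise a scalar multiple of $g$; since $n\ge 3$, the contracted second Bianchi identity (Schur's lemma) forces $\rho+\mu$ to be constant, and by continuity $(M,g)$ is Einstein on all of $M$. The main obstacle is the bookkeeping in the first step: one has to combine the various $df\wedge g$ and $df\wedge P$ terms produced by the Ricci identity together with the Weyl decomposition so that the Cotton normalization cleanly emerges and the coefficient in front of $f_i P_{jk}-f_j P_{ik}$ survives nontrivially, which is precisely where the standing assumption $m>1$ enters in an essential way. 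Once that identity is in place, the extraction of the spectral conclusion by contracting with $\nabla^{i}f$ and using $W(\nabla f,\cdot,\cdot,\nabla f)=0$ is the straightforward part of the argument.
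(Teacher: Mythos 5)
The paper itself does not prove this lemma --- it is quoted as a cited result from \cite{HPW} --- so there is no internal proof to compare against; your proposal reconstructs essentially the argument of \cite{HPW}: differentiate the structure equation \eqref{hess}, antisymmetrize via the Ricci identity, isolate the Cotton tensor, set it to zero by harmonic Weyl, contract with $\nabla f$ so that $W(\nabla f,\cdot,\cdot,\nabla f)=0$ removes the Weyl term, and read off the spectral splitting of $P$. Two details worth tightening: forcing $P\nabla f\parallel\nabla f$ from the antisymmetric part requires the combination $\alpha+\beta$ (not only $\alpha$) to be nonzero, which should be verified from the explicit constants in $n$ and $m$; and the final clause that one eigenvalue implies $(M,g)$ is Einstein needs the eigenvalues to coincide on an open set, after which Schur's lemma and the analyticity of $g$ (cf.\ \cite[Proposition 2.4]{HPW}, exactly as used in the proof of Theorem \ref{improved 7.1}) give the conclusion, rather than coincidence at the single point $p$.
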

Then the authors define $O=\{x\in M:df(x)\neq0,\sigma_1(x)\neq\sigma_2(x)\}$ where $\sigma_1$ and $\sigma_2$ are the eigenvalues of the Schouten tensor. If $(M,g)$ is Einstein, then there is only one eigenvalue, i.e. $\sigma_1=\sigma_2$. In this case Hess$f$ is proportional to the metric, cf. Example  \ref{einstein case} for the relevant results. Using Lemma \ref{ev2} they decompose the metric $g$ in a neighborhood of a point $p\in O$ into a warped product $g=dt^2+u^2(t)g_N$, further they show that $f=f(t)$:
\begin{mytheo}{$($\cite{HPW}, Theorem $7.1)$}.
\label{7.222}
Suppose $m>1$ and $(M,g,f)$ is a $(\lambda,n+m)$-Einstein metric with harmonic Weyl tensor and $W(\nabla f,.,.,\nabla f)=0$ in an open set containing $p\in O$. Then 
 $$g=dt^2+u^2(t)g_N$$
 $$ f= f(t)$$
 in a neighborhood of $p$, where $g_N$ is an Einstein metric. Moreover if the metric is locally conformally flat around $p$, then $N$ is a space of constant sectional curvature.
\end{mytheo}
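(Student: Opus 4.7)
The plan is to use the eigenspace decomposition from Lemma \ref{ev2} to construct adapted coordinates in which the warped product structure becomes manifest. Since $p \in O$ we have $\nabla f \neq 0$ and the two eigenspaces are distinct and smooth in a neighborhood of $p$, so $TM = \mathrm{span}(\nabla f) \oplus \nabla f^{\perp}$ is a smooth splitting with $\nabla f^{\perp}$ integrable (its leaves being the level sets $N_c = f^{-1}(c)$). First I would verify that $|\nabla f|^2$ is constant on each $N_c$: for $X \perp \nabla f$,
\[
\tfrac{1}{2} X(|\nabla f|^2) = \mathrm{Hess}\,f(X,\nabla f) = \tfrac{f}{m}\big(\mathrm{Ric}(X,\nabla f) - \lambda g(X,\nabla f)\big) = 0,
\]
using \eqref{hess} together with Lemma \ref{ev2}, which forces $\mathrm{Ric}(X, \nabla f) = 0$ for $X \perp \nabla f$. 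Hence $|\nabla f|$ is a function of $f$ alone, so I can introduce an arc-length parameter $t$ along the normal direction via $dt = df/|\nabla f|$, obtaining $\partial_t = \nabla f/|\nabla f|$ as unit normal to the level sets. Automatically $f = f(t)$ and locally $g = dt^2 + g_t$, where $g_t$ is the induced metric on $N_t$.

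The main step is to show that $g_t = u^2(t) g_N$ for a fixed metric $g_N$ on a reference leaf. Restricting \eqref{hess} to tangent vectors $X, Y \perp \nabla f$ and using Lemma \ref{ev2} to write $\mathrm{Ric}(X,Y) = \sigma_\perp\, g(X,Y)$ gives $\mathrm{Hess}\,f(X,Y) = \tfrac{f}{m}(\sigma_\perp - \lambda)\, g(X,Y)$. Since the second fundamental form of $N_t$ satisfies $II(X,Y) = \mathrm{Hess}\,f(X,Y)/|\nabla f|$, this shows each leaf is totally umbilic with umbilicity factor $\phi(p) = \tfrac{f(\sigma_\perp-\lambda)}{m|\nabla f|}$. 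To upgrade to $\phi$ being constant along each leaf I would apply the Codazzi equation $(\overline{\nabla}_X II)(Y,Z) - (\overline{\nabla}_Y II)(X,Z) = R(X,Y,\nu,Z)$ with $\nu = \partial_t$: substituting $II = \phi g$ and tracing over $Y, Z$ collapses the left-hand side to $(n-2) X(\phi)$ and the right-hand side to $\mathrm{Ric}(X,\nu)$, which vanishes by Lemma \ref{ev2}. Hence $\phi = \phi(t)$, and integrating the evolution $\partial_t g_t = 2\phi(t) g_t$ along the flow of $\partial_t$ produces $g_t = u^2(t) g_N$ with $(\log u)' = \phi$. This Codazzi step is what I expect to require the most care.

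With the warped product form $g = dt^2 + u^2(t) g_N$ established, it remains to show $g_N$ is Einstein. The classical Ricci formula for such a warped product yields, on tangent vectors $X, Y$ to the fibre,
\[
\mathrm{Ric}(X,Y) = \mathrm{Ric}_N(X,Y) - \big((n-2)(u')^2 + u u''\big)\, g_N(X,Y),
\]
while the eigenvalue description provides $\mathrm{Ric}(X,Y) = \sigma_\perp(t)\, u^2(t)\, g_N(X,Y)$. Solving for $\mathrm{Ric}_N$ gives $\mathrm{Ric}_N(X,Y) = k(t)\, g_N(X,Y)$; since the left-hand side does not depend on $t$, the coefficient $k$ must be constant, so $g_N$ is Einstein. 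Finally, if $g$ is locally conformally flat around $p$, the vanishing of the Weyl tensor combined with the standard Weyl decomposition for a warped product of the form $dt^2 + u^2(t) g_N$ forces $(N, g_N)$ to have constant sectional curvature.
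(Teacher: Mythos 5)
Your proposal is correct, and its overall architecture coincides with the argument the paper actually gives (note that the paper does not reprove Theorem \ref{7.222} itself --- it is quoted from \cite{HPW} --- but the relevant argument appears in the proof of its generalization, Theorem \ref{improved 7.1}): constancy of $|\nabla f|$ on the leaves, the arc-length parameter $dt=df/|\nabla f|$, the conclusion $f=f(t)$, umbilicity of the level sets from Equation \eqref{hess} together with Lemma \ref{ev2}, integration of the evolution of $g_t$ along the unit normal flow to obtain $g_t=u^2(t)g_N$, and the warped-product Ricci formula to force $g_N$ Einstein. The one place where you genuinely diverge is the mechanism for showing the umbilicity factor $\phi$ is constant along each leaf. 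You derive it from the Gauss--Codazzi equation of the hypersurface: with $II=\phi g$, tracing gives $(n-2)X(\phi)=\pm\mathrm{Ric}(X,\nu)=0$, using only that $\nabla f$ is a Ricci eigenvector. The paper instead exploits the harmonic Weyl hypothesis more directly: the Schouten tensor is a Codazzi tensor, so by \cite[16.11(iii)]{B} its multiplicity-$(n-1)$ eigenvalue $\sigma_2$ is locally constant on the leaves, while $D_X\rho=\frac{2}{f}P(\nabla f,X)=0$ controls $\rho$ and $\mathrm{Scal}$, and hence $\mu_1,\mu_2$ via \eqref{regular151}. Your route is more self-contained at that step, but the factor $n-2$ in the trace makes it vacuous for $n=2$, whereas the paper's route (and the rest of your argument, e.g. the Weyl decomposition in the conformally flat case) is anyway only meaningful for $n\geq3$; both routes produce the same warping function, since your $\phi$ equals $\mu_2/f'$ and your $u=\exp\int\phi$ is exactly the paper's \eqref{power01}.
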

Then the authors globally characterize $(\lambda,n+m)$-Einstein metrics in the Riemannian case as their main result:
\begin{mytheo}{$($\cite{HPW}, Theorem $7.2)$}.
\label{correction}
Let $m>1$ and suppose that $(M,g)$ is a complete, simply connected Riemannian manifold and has harmonic Weyl tensor and $W(\nabla f,.,.,\nabla f)=0$, then $(M,g,f)$ is a non-trivial 
	$(\lambda,n+m)$-Einstein metric if and only if it is of the form
	$$g=dt^2+u^2(t)g_N$$
	$$f=f(t)$$
	where $g_N$ is an Einstein metric. Moreover, if $\lambda\geq0$ then $(N,g_N)$ has non-negative Ricci curvature, and if it is Ricci flat, then $u$ is a constant, i.e, $(M,g)$ is a Riemannian product.
\end{mytheo}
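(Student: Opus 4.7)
The plan is: prove the direct (``if'') direction by a warped product computation, then prove the converse by combining the local structure from Theorem \ref{7.222} with a globalization argument using completeness and simple connectedness, and finally derive the Ricci-curvature consequences from the resulting ODE system.

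\textbf{Warped product direction.} Assume $g=dt^2+u^2(t)g_N$ and $f=f(t)$ with $(N,g_N)$ Einstein, $\mathrm{Ric}_N=\mu_N g_N$. A direct computation of $\mathrm{Hess}\,f$ and $\mathrm{Ric}$ on a warped product shows that $\mathrm{Hess}\,f=\tfrac{f}{m}(\mathrm{Ric}-\lambda g)$ is equivalent to the coupled system
\begin{align*}
f''&=\tfrac{f}{m}\!\left(-(n-1)\tfrac{u''}{u}-\lambda\right),\\
u u' f'&=\tfrac{f}{m}\!\left(\mu_N-uu''-(n-2)(u')^2-\lambda u^2\right),
\end{align*}
and the backward direction is immediate under the tacit assumption that $u,f,\mu_N$ solve these ODEs with $f>0$ in the interior. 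I would also note that harmonic Weyl and $W(\nabla f,\cdot,\cdot,\nabla f)=0$ are automatic on any warped product with Einstein fiber.

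\textbf{Local structure.} For the forward direction, first dispose of the Einstein case: if $(M,g)$ is Einstein then by Lemma \ref{ev2} and the field equation $\mathrm{Hess}\,f$ is a scalar multiple of $g$, so the classical Tashiro/Obata-type theorem on complete simply connected manifolds admitting such a non-trivial function already forces the warped product form with $N$ of constant curvature. Otherwise the open set $O=\{df\neq 0,\sigma_1\neq\sigma_2\}$ is non-empty, and Theorem \ref{7.222} supplies a warped product neighborhood $g=dt^2+u^2(t)g_N$, $f=f(t)$ around every point of $O$, with $t$ the signed arclength along integral curves of $\nabla f/|\nabla f|$ and the fibers $N_t$ being the level sets of $f$.

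\textbf{Globalization.} To patch local charts into a single global decomposition, I would use the flow $\Phi_s$ of $\nabla f/|\nabla f|$ on $O$. Pick a regular level $N=f^{-1}(c)$. Completeness lets $\Phi_s$ extend maximally (it can only terminate at $M\setminus O$ or at $\partial M$), and simple connectedness both rules out the flow returning to $N$ along a nontrivial loop and forces $N$ to be connected. This yields a diffeomorphism onto $(a,b)\times N$ realising the claimed warped product on the connected component of $O$ meeting $N$. The serious obstacle is extending across $M\setminus O$: at a critical point of $f$ a fiber collapses, and smoothness of $g$ together with the radial ODE forces $u(t)$ to vanish linearly and $(N,g_N)$ to be a round sphere, closing up the warped product smoothly just as for a ball in polar coordinates; whereas points with $\sigma_1=\sigma_2$ but $df\neq 0$ form a real-analytic closed subset of lower dimension (the eigenvalues satisfy analytic ODEs along the flow), so $O$ is dense and the warped product form persists by continuity and smoothness.

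\textbf{Ricci curvature conclusion.} Elimination of $u''$ between the two ODEs above yields $\mu_N$ as an explicit expression in $u,u',f,f',\lambda$. When $\lambda\geq 0$, inspection of this identity together with $f>0$ on the interior gives $\mu_N\geq 0$; if additionally $\mu_N=0$ the identity reduces to one forcing $u'\equiv 0$, whence $u$ is a positive constant and $(M,g)$ is isometric to a metric product. The hardest step overall will be the globalization across $M\setminus O$, which is precisely where both the completeness and the simple connectedness hypotheses must be used most delicately, and where the incorrect statement in \cite{HPW} presumably needs the preparatory results (including Theorem \ref{aroundcritical}) developed earlier in this paper.
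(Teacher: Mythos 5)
You should first be aware that the paper does not prove Theorem \ref{correction} at all: its central thesis is that this statement (quoted from \cite{HPW}) is defective, and the paper's actual contribution is the corrected version, Theorem \ref{Petersen22.2}, proved via Proposition \ref{equ}, Theorem \ref{improved 7.1}, Lemma \ref{isolated critical} and Theorem \ref{aroundcritical}. Your proposal implicitly concedes the main defect without resolving it: in the ``if'' direction you write that the conclusion holds ``under the tacit assumption that $u,f,\mu_N$ solve these ODEs,'' but that assumption is not part of the statement, so what you would prove is not the stated equivalence. An arbitrary pair $(u,f)$ on a warped product with Einstein fibre does not satisfy Equation \eqref{hess}; the characterization only becomes an equivalence once equations \eqref{equ1}, \eqref{equuu} (and \eqref{equ0} at the boundary) are built into the right-hand side, which is exactly why Proposition \ref{equ} exists and is the paper's ``fourth point.'' Likewise the global identity $g=dt^2+u^2(t)g_N$ cannot literally hold on all of $M$ when $f$ has critical points, since there $t$ degenerates and $u$ vanishes; the corrected statement works on $M\setminus C$ with $|C|\le 2$ and must additionally prescribe that $u$ is odd with $u'\neq0$ at the corresponding ends of $I$ so that the metric closes up smoothly (Lemma \ref{isolated critical} and Theorem \ref{aroundcritical}). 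A proof of the theorem exactly as stated is therefore not available, and your sketch quietly proves a different (corrected) statement.

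Within the parts you do attempt, the one step that genuinely fails is the Ricci-curvature clause. Eliminating $u''$ between your two ODEs gives $\mu_N=\frac{muu'f'}{f}+(n-2)(u')^2-\frac{mu^2f''}{(n-1)f}+\frac{n-2}{n-1}\lambda u^2$, which contains the terms $\frac{muu'f'}{f}$ and $-\frac{mu^2f''}{(n-1)f}$ of indeterminate sign; ``inspection'' does not yield $\mu_N\ge 0$ for $\lambda\ge 0$, and the paper offers no proof of this clause either (it is simply dropped from Theorem \ref{Petersen22.2}). Two smaller remarks on the globalization, which is otherwise close in spirit to the paper's argument: simple connectedness is not actually needed (strict monotonicity of $f$ along its gradient trajectories already prevents a trajectory from returning to the same level set, which is the paper's ``first point''), and the density of $O$ is obtained in the paper not by a dimension count but by the Schur-plus-analyticity dichotomy --- either $\sigma_1\equiv\sigma_2$ on the whole neighborhood, in which case $(M,g)$ is Einstein and one falls back on Example \ref{einstein case}, or $O$ is dense and the warped structure of Theorem \ref{improved 7.1} extends across the exceptional set by smoothness of $t$ wherever $\nabla f\neq 0$.
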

Theorem \ref{correction} is not exactly stated. For example, there is a little observation in connecting Theorem \ref{7.222} with Theorem \ref{correction} which is missing in \cite{HPW}. In fact, Theorem \ref{7.222} is a classification around points $p\in O$, while making a global characterization it may happen that in a neighborhood of a regular point there are some points at which the Schouten tensor has only one eigenvalue, cf. Lemma \ref{ev2}. We go through this observation in the proof of Theorem \ref{improved 7.1}. More prerequisites for the restatement of Theorem \ref{correction} are listed right before Theorem \ref{Petersen22.2}. 

As the main results of this paper, we first formulate relation between the warping function of a local warped product form of $g$ and the function $f$ of a $(\lambda,n+m)$-Einstein metric $(M,g,f)$. Then we characterize $(\lambda,n+m)$-Einstein metrics around regular points of $f$ where $g$ has harmonic Weyl tensor and satisfies $W(\nabla f,.,.,\nabla f)=0$. In addition we show that under the conditions of harmonicity of the Weyl tensor and $W(\nabla f,.,.,\nabla f)=0$, critical points of $f$ in a  $(\lambda,n+m)$-Einstein metric $(M,g,f)$ are isolated; consequently we characterize the $(\lambda,n+m)$-Einstein metric if $g$ is locally conformally flat around critical points of $f$. Finally we restate the global characterization in Theorem \ref{correction}. 

This paper is organized in the following way. In the second section we bring some examples. In the third section, first we obtain the formula describing the relation between the warping function of a one-dimensional basis warped product and the function $f$ of a $(\lambda,n+m)$-Einstein structure $(M,g,f)$ under some specific conditions. Then, we locally characterize $(\lambda,n+m)$-Einstein manifolds around regular points of $f$. In section 4 we show that critical points of $f$ in a triple $(M,g,f)$ satisfying Equation \eqref{hess} of a non-trivial $(\lambda,n+m)$-Einstein metric with harmonic Weyl tensor and $W(\nabla f,.,.,\nabla f)=0$ are isolated. Then we characterize a triple $(M,g,f)$ which is conformally flat and satisfies Equation \eqref{hess} of a non-trivial $(\lambda,n+m)$-Einstein metric around a critical point of $f$. We close it in section 5 by explaining the unnecessary and missing  properties in the formulation of the global result \cite[Theorem 7.2]{HPW} and restate it.
\section{Examples}
 Classical solutions like $u(t)=t, sin(t), e^t$, $cosh(t)$ give us one dimensional basis warped product $(\lambda,n+m)$-Einstein metrics for appropriate choices of $f(t)$, $k$, $\lambda$ 
and $m$. In particular $u(t)=e^t$ with $\lambda=-n-m+1,\enskip k=0$ and $f(t)=ae^t,\enskip a\in {\mathbb {R}}^+$ provides us with an example where $(M,g)$ is Einstein with normalized scalar 
curvature $k_g=-1$ (by \cite[Lemma 2.5.(22)]{KR3}). Here the manifold $(M,g)$ is without boundary as the function $f$ is always positive. This example is not interesting by Proposition \ref{einstein case} and \cite[page 434]{KR1}. As a classical example with non-empty boundary we have:
\begin{myex}
\label{examp2}
As a classical example let 
$g= dt^2+cosh^2(t)g_{H^{n-1}(-1)}$ on $M=[0,\infty)\times H^{n-1}(-1)$
where $H^{n-1}(-1)$ is the hyperbolic space with standard metric, further let $\lambda=1-n-m$ and  $f(t)=\sinh(t)$. Then we obtain a non-trivial $(\lambda,n+m)$-Einstein manifold with the boundary the slice  $\{t_0=0\}\times H^{n-1}(-1)$. 
\end{myex}
 We may recall the following classification of Riemannian $(\lambda,n+m)$-Einstein metrics which are also Einstein. 
\begin{myex}{$($\cite{HPW}, Proposition $3.1)$}.
\label{einstein case}
Let $n\geq2$ and let $(M^n,g,f)$ be a non-trivial $(\lambda,n+m)$-Einstein manifold which in addition is $\rho$-Einstein. Then it is isometric to one of the examples in the following table for
$\bar{k}=\frac{\lambda-\rho}{m}$.

\begin{center}
\begin{tabular}{ |c|c|c|c| }
 \hline
 & $\lambda > 0$ & $\lambda = 0$ &$ \lambda < 0$\\ \hline
 {$\mu > 0$} &$ D^n$ & [0,$\infty$)$ \times F$ & [0,$\infty$) $\times N$\\ 
 & g=$dt^2+\sqrt{\bar{k}} sin^2(\sqrt{\bar{k}}t) g_{S^{n-1}}$& g=$dt^2+g_F $& g=$dt^2+\sqrt{-\bar{k}} 
 cosh^2(\sqrt{-\bar{k}}t)g_N$\\
&  f(t)=C $cos(\sqrt{\bar{k}}t)$& $f(t)=C t$ & f(t)=C $\sinh(\sqrt{-\bar{k}}t)$\\ \hline
 {$\mu$ = 0} & None & None & (-$\infty$, $\infty$) $\times F$\\
 & & & g=$dt^2+e^{2\sqrt{-\bar{k}}t}g_F$\\
 & & & $ f(t)=C e^{\sqrt{-\bar{k}}t}$\\ \hline
 {$\mu < 0$} & None & None & $H^n$\\
 & & & g=$dt^2+\sqrt{-\bar{k}} \sinh^2(\sqrt{-\bar{k}}t)g_{S^{n-1}}$\\
 & & & f(t)=C $\cosh(\sqrt{-\bar{k}}t)$\\
 \hline
\end{tabular}
\end{center}
In the table above, $S^{n-1}$ denotes a round sphere, $F$ is Ricci flat and $N$ denotes an Einstein metric with negative Ricci curvature and $C\in\mathbb{R}^+$ is arbitrary.
\end{myex}
For more examples see \cite[Example 9.118(a)]{B} as well as Section 3 of \cite{HPW}. For example,  \cite[Example 3.5]{HPW} indicates a local Riemannian $(\lambda,3+m)$-Einstein structure which is not locally conformally flat. 
\section{Local characterization of Riemannian $(\lambda,n+m)$-Einstein metrics $(M,g,f)$ around regular points of $f$}
One dimensional Riemannian $(\lambda,1+m)$-Einstein manifolds are classified in \cite[Example 3.1]{HPW}. Hence, we only consider $dim(M)=n>1$ unless the opposite is stated. First,  we characterize $(\lambda,n+m)$-Einstein manifolds $(M,g,f)$ where $g$ is a one dimensional-basis warped product with Einstein fibre. Through this characterization, in fact we want to formulate the relation between the warping function of a local warped product form of $g$ and the function $f$.
\begin{myprop}
\label{equ}
Let $\lambda\in\mathbb{R}$, $m\geq1$, $n>1$ integers and $g_N$ a Riemannian Einstein metric say with normalized scalar curvature $\varrho_N=k\in\mathbb{R}$, i.e. $Ric_N = k(n-2)g_N$, on an $(n-1)$-dimensional
manifold $N$ and $g=dt^2 + u^2(t)g_N$ a warped product metric on $M = I\times N$ with an interval $I\subset\mathbb{R}$. In addition suppose $f=f(t)$ is a smooth non-negative  function on $I$. Then $(M=I\times N,g,f)$ satisfies Equation \eqref{hess} of a $(\lambda,n+m)$-Einstein manifold if and only if the following conditions hold\\\\
 1. On $int(M)$
\begin{align}
&f'm\frac{u'}{u}+\{\lambda-\frac{(n-2)k-(n-2)u'^2-uu''}{u^2}\}f=0 \label{equ1}\\\nonumber\\
&\lambda^2 u^4-2(n-2)k\lambda u^2+(m+2(n-2))\lambda u^2u'^2+(2+m)\lambda u^3u''\nonumber\\
+&(n-2)^2k^2-(2(n-2)+m)(n-2)ku'^2-(2+m)(n-2)kuu''\label{equuu}\\
+&(n-2)(m+n-2)u'^4+(2(n-2)+m)uu'^2u''+(1+m)u^2u''^2\nonumber\\
-&mu^2u'u'''=0\nonumber
\end{align}
2. On $\partial M$ 
\begin{align}
\label{equ0}
 f''(t)=u'(t)=0.
\end{align}
\end{myprop}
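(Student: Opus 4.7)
The plan is to substitute the warped-product ansatz into (\ref{hess}), decompose the resulting tensor identity into its $\partial_t$-normal and $N$-tangential parts, read off (\ref{equ1}) directly from the tangential part and (\ref{equuu}) as the integrability condition obtained by eliminating $f$ and $f'$ from the pair, and finally deduce the boundary conditions from the requirement $f=0$ on $\partial M$.

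First I would compute the Ricci tensor of $g = dt^2 + u^2(t) g_N$ via the standard warped-product formulas, obtaining $\mathrm{Ric}(\partial_t,\partial_t) = -(n-1)u''/u$ and $\mathrm{Ric}(X,Y) = [(n-2)k - (n-2)(u')^2 - u u''] g_N(X,Y)$ for $X,Y$ tangent to $N$ (using $\mathrm{Ric}_N = k(n-2) g_N$). The Hessian of $f=f(t)$ has non-vanishing components $\mathrm{Hess}\,f(\partial_t,\partial_t) = f''$ and $\mathrm{Hess}\,f(X,Y) = u u' f' \, g_N(X,Y)$. Substituting into (\ref{hess}) and separating components yields on $\mathrm{int}(M)$ the pair
\begin{align*}
(\mathrm{A})\ & m f'' + f\!\left((n-1)\tfrac{u''}{u} + \lambda\right) = 0, \\
(\mathrm{B})\ & m u u' f' + f\!\left[\lambda u^2 - (n-2)k + (n-2)(u')^2 + u u''\right] = 0,
\end{align*}
where dividing (B) by $u^2$ recovers (\ref{equ1}).

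To extract (\ref{equuu}), I would eliminate $f$ and $f'$ from this pair. At points where $u' \neq 0$, (\ref{equ1}) expresses $f'/f$ as a rational function of $u, u', u''$; differentiating this relation once produces an identity among $f, f', f''$ with coefficients depending on $u$ up to order three. Substituting (A) to replace $f''$ and (\ref{equ1}) to replace $f'$, then dividing by $f > 0$, leaves a polynomial identity purely in $u, u', u'', u'''$, which after multiplication by $m u^4$ is precisely (\ref{equuu}); the $u^2 u' u'''$ term arises from differentiating the $u u''$ contribution in the coefficient of $f$ in (\ref{equ1}). Running the same algebra in reverse shows that (\ref{equ1}) together with (\ref{equuu}) imply (A), so on $\mathrm{int}(M)$ the system (\ref{equ1})+(\ref{equuu}) is equivalent to (\ref{hess}).

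For the boundary conditions, $f=0$ on $\partial M$ and (\ref{hess}) force $\mathrm{Hess}\,f = 0$ there; the $(\partial_t,\partial_t)$-component gives $f''=0$ directly, while the tangential component gives $u u' f' = 0$. If we had $f(t_0)=f'(t_0)=0$ at a boundary point $t_0$, then (A), being a linear second-order ODE with smooth coefficients near $t_0$, would force $f\equiv 0$ by Cauchy uniqueness, contradicting $f>0$ on $\mathrm{int}(M)$; hence $f'(t_0)\neq 0$ and therefore $u'(t_0)=0$. Conversely, when $f=u'=f''=0$ on $\partial M$ both sides of (\ref{hess}) vanish there. The principal obstacle is the algebraic bookkeeping in deriving (\ref{equuu}), since it is a third-order polynomial identity with many terms; a subtler point is that the elimination argument is a priori valid only where $u'\neq 0$, so one must extend (\ref{equuu}) to the zero set of $u'$ by continuity and separately check that at such points (\ref{equ1}) reduces to an algebraic constraint already consistent with (A) in view of (\ref{equuu}).
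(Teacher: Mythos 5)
Your proposal follows essentially the same route as the paper: the same warped-product Ricci and Hessian formulas, the same splitting of Equation \eqref{hess} into its $(\partial_t,\partial_t)$ and $N$-tangential components (the paper's first and second necessary conditions), and the same elimination of $f$, $f'$, $f''$ to obtain \eqref{equuu} as the compatibility condition, with the same reversal for sufficiency. The only divergence is at the boundary, where you obtain $f'\neq 0$ from Cauchy uniqueness for the linear second-order ODE in the normal direction instead of citing Proposition \ref{constantboundary} as the paper does; both are valid, and your observation that the elimination a priori requires $u'\neq 0$ (and the exact normalizing factor, which is $m^2u^2u'^2$ rather than $mu^4$) flags bookkeeping details the paper itself glosses over.
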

\begin{proof}
As $(M,g)$ may have boundary, first we consider $g=dt^2+u^2(t)g_N$ on $int(M=I\times N)$ on which $f>0$.   
Using the relations $\nabla_X\partial_t=\frac{u'}{u}X$ and $\nabla_{\partial_t}\partial_t=0$ one obtains
\begin{align}
\nabla^2f(\partial_t,\partial_t)=f''g(\partial_t,\partial_t)=f''\label{equ3}\\
\nabla^2f(X,X)=\frac{f'u'}{u}g(X,X).\label{30}
\end{align}
Also, from \cite[Lemma 2.5 ]{KR3} we have the following equations where $k$ signifies the normalized scalar curvature of $g_N$
\begin{align}
Ric(\partial_t,\partial_t&)=-(n-1)\frac{u''}{u}g(\partial_t,\partial_t)=-(n-1)\frac{u''}{u}\label{equ5}
\end{align}
and
\begin{align}
Ric(X,X)&=Ric_N(X,X)-\frac{1}{u^2}[(n-2)u'^2+uu'']g(X,X)\nonumber\\
&=k(n-2)g_N(X,X)-\frac{1}{u^2}[(n-2)u'^2+uu'']g(X,X)\label{equ66}
\end{align}
where $X$ denotes any unit tangent vector orthogonal to $\partial_t$. Let $(M,g,f)$ satisfies Equation \eqref{hess} of a $(\lambda,n+m)$-Einstein manifold on $int(M)$. Tangent vectors to $M$ can be divided into the category of those tangent in the direction of the first factor of $I\times N$, i.e. scalar multiplications of $\partial_t$, and into the category of tangent vectors to 
$N$. Tensorial Equation \eqref{hess} on vectors of the first category,
\begin{align}
\nabla^2 f(\partial_t,\partial_t)=\frac{f}{m}(Ric(\partial_t,\partial_t)-\lambda g(\partial_t,\partial_t)),\label{equ29}
\end{align}
via \eqref{equ3} $\&$ \eqref{equ5} becomes
\begin{align}
f''+(\frac{(n-1)}{m}\frac{u''}{u}+\frac{\lambda}{m})f=0,\label{equ31}
\end{align}
the so-called first necessary condition. Now let's pay attention to evaluation on the second category of vector fields
\begin{align}
\label{equ35}
 \nabla^2f(X,Y)=\frac{f}{m}(Ric(X,Y)-\lambda g(X,Y)) \enskip\enskip\enskip X,Y\perp \frac{\partial}{\partial t},
\end{align}
which by \eqref{30} $\&$ \eqref{equ66} forms the so-called second necessary condition,
\begin{align}
f'm\frac{u'}{u}+\{\lambda-\frac{(n-2)k-(n-2)u'^2-uu''}{u^2}\}f=0.\label{equ32}
\end{align}
Note that \eqref{equ1} is actually the second necessary condition \eqref{equ32}. Furthermore, through the functions 
\begin{align*}
 &a(t)=\frac{(n-1)}{m}\frac{u''}{u}+\frac{\lambda}{m}\\
 &b(t)=m\frac{u'}{u}\\
 &c(t)=\lambda-\frac{(n-2)k-(n-2)u'^2-uu'')}{u^2}
\end{align*}
we can denote the necessary conditions \eqref{equ31} and \eqref{equ32} 
respectively by
\begin{align}
 &f''=-af\label{equ8}\\
 &f'=-\frac{c}{b}f.\label{equ9}
\end{align}
We still need to show that the second claim \eqref{equuu} is satisfied: We differentiate \eqref{equ9} and then compare it to \eqref{equ8} which gives us
\begin{align}
f''=-af=(-\frac{c}{b}f)'=-(\frac{c}{b})'f-(\frac{c}{b})f'.\label{equ99}
\end{align}
By further application of \eqref{equ9} we obtain
\begin{align}
\label{equ110}
 -af=-(\frac{c}{b})'f+(\frac{c}{b})^2f
\end{align}
which reduces to
\begin{align}
-a=-(\frac{c}{b})'+(\frac{c}{b})^2.\label{equ100} 
\end{align}
We rewrite \eqref{equ100} in terms of $u(t)$, $u'(t)$ and $u''(t)$ as   
\begin{align}
&-\Big(\frac{(n-1)}{m}\frac{u''}{u}+\frac{\lambda}{m}\Big)=-\Big(\{\lambda-\frac{(n-2)k-(n-2)u'^2-uu''}{u^2}\}\frac{u}{mu'}\Big)'\nonumber\\
&+\Big(\lambda-\frac{(n-2)k-(n-2)u'^2-uu''}{u^2}\Big)^2\frac{u^2}{m^2u'^2}\label{equ4s}
\end{align}
which after simplification gives us \eqref{equuu}.\\ 

On $int(M)$ for the converse, suppose that we have 
$f=f(t)$ and $g=dt^2+u^2(t)g_N$ where $g_N$ is Einstein with normalized scalar curvature $\varrho_N=k$. 
Additionally, assume Equation \eqref{equ1} for some $\lambda\in\mathbb{R}$ holds. This says that the so-called second necessary condition \eqref{equ32} holds. 

It remains to show that  tensorial Equation \eqref{hess} is also satisfied with
vectors tangent on the first factor of $I\times N$ to prove the triple $(M,g,f)$ satisfies Equation \eqref{hess} of $(\lambda,n+m)$-Einstein manifolds. To this end, we utilize Equation \eqref{equuu} which is the simplification of \eqref{equ4s}. On the other hand by the labels $a(t)$, $b(t)$ and $c(t)$ Equation \eqref{equ4s} can be written 
in the form of \eqref{equ100} which after multiplication by $f$ gives out \eqref{equ110} (note that $f>0$ on $int(M)$). From Equation \eqref{equ110} together with Equation \eqref{equ9}, which is actually \eqref{equ1} using the labels $a(t)$, $b(t)$ and $c(t)$, we see that Equation \eqref{equ99} holds. On the other hand, the derivative of \eqref{equ99} comparing with \eqref{equ9} gives out Equation \eqref{equ8} which is equivalent to the so-called first necessary condition \eqref{equ31}. So, for all vector fields tangent to $int(M)$ the tensorial Equation \eqref{hess} of a 
$(\lambda,n+m)$-Einstein manifold holds.\\

Secondly let's investigate the characterization on $\partial M$: Since the Hessian tensor $\nabla^2 f$ vanishes on 
the boundary $\partial M$, equations \eqref{equ3} and \eqref{30} imply
\begin{equation}
 f''(t)=0
\end{equation}
and
\begin{equation}
f'(t)u'(t)=0
\end{equation}
from which the latter together with Proposition \ref{constantboundary} (which says $f'(t)\neq0$ on $\partial  M$) implies $u'(t)=0$. Therefore on 
$\partial M$ one obtains Equation \eqref{equ0}. 

Conversely, as $f$ vanishes on $\partial M$ the right-hand side of Equation \eqref{hess} becomes zero. Thus the left-hand side of Equation \eqref{hess} must also 
vanish on it. On the other hand by Equation \eqref{equ0} the Hessian tensor $\nabla^2f$ identically vanishes on 
$\partial M$, hence Equation \eqref{hess} holds in this case as well. 

Therefore in each case the tensorial Equation \eqref{hess} of a $(\lambda,n+m)$-Einstein manifold holds on $M$ .
\end{proof}
\begin{myrem}
 In Proposition \ref{equ} for a positive solution $u(t)$ of \eqref{equuu} on $int(M)$, Equation \eqref{equ1} expresses $f(t)$ in terms of $u(t)$ by 
 $f(t)=e^{-\int^t_0\frac{c(s)}{b(s)}ds}f(0)$ as far as $u'\neq0$. On $\partial M$ the functions $f(t)$ and $u(t)$ satisfy Equation \eqref{equ0}.
\end{myrem}
\begin{myrem}
The procedure through equations \eqref{equ8} to \eqref{equ100} in the proof of Proposition \ref{equ} shows that if $f$ satisfies \eqref{equ1}, i.e. the so-called second necessary condition  
, then it also satisfies the so-called first necessary condition \eqref{equ31} of a $(\lambda,n+m)$-Einstein 
manifold under condition \eqref{equuu} which is equivalent to \eqref{equ100} and hence, the domain of solution of the differential equations system of the $(\lambda,n+m)$-Einstein manifold consisting of \eqref{equ31} and \eqref{equ32} is not empty for $f$.
\end{myrem}
Before we characterize locally $(\lambda,n+m)$-Einstein manifold $(M,g,f)$ around regular points of $f$, we need to remind a pair of statements:
\begin{myrem}{$($\cite{HPW}, Remark $7.3)$}.
\label{p6}
$\nabla f$ is an eigenfield for the tensor $P$ (or for the Schouten tensor) if and only if $\nabla f$ is an eigenfield for the tensor Hess $f$. If this holds then $|\nabla f|^2$ is 
constant on the connected components of the level sets of $f$. Because for any $X\perp \nabla f$
 $$\nabla_X|\nabla f|^2=2Hess f(\nabla f,X)=2\mu_1g(\nabla f, X)=0$$
in particular, the connected components of the regular levels sets for $f$ form
a Riemannian foliation of an open subset of $M$.
\end{myrem}
\begin{myprop}{$($\cite{HPW}, Proposition 2.2.$)$}.
 \label{constantboundary}
 On the boundary $\partial M$ of a Riemannian $(\lambda,n+m)$-Einstein manifold $(M,g,f)$ we always have $\nabla f\neq0$.
\end{myprop}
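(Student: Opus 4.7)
The plan is to argue by contradiction. Suppose $p \in \partial M$ satisfies $\nabla f(p) = 0$. Since $\partial M$ is a smooth hypersurface, there is a unit vector $v \in T_p M$ pointing into $\mathrm{int}(M)$; launch the geodesic $\gamma(t) = \exp_p(tv)$ on a maximal interval $[0,\varepsilon)$ with $\gamma((0,\varepsilon)) \subset \mathrm{int}(M)$, and set $h(t) := f(\gamma(t))$.

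Since $\gamma$ is a unit-speed geodesic, differentiating along $\gamma$ gives $h'(t) = g(\nabla f, \gamma'(t))$ and $h''(t) = \nabla^2 f(\gamma'(t), \gamma'(t))$. Substituting the defining Equation \eqref{hess} and using $|\gamma'| = 1$, we obtain the linear second-order ODE
\[
 h''(t) \;=\; \frac{1}{m}\bigl(Ric(\gamma'(t),\gamma'(t)) - \lambda\bigr)\,h(t),
\]
whose coefficient is continuous along $\gamma$ since $g$ is smooth. The initial data are $h(0) = f(p) = 0$ because $f$ vanishes on $\partial M$, and $h'(0) = g(\nabla f(p), v) = 0$ by the contradiction hypothesis.

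By uniqueness of solutions to linear second-order ODEs with continuous coefficients, $h \equiv 0$ on $[0,\varepsilon)$. But for $t \in (0,\varepsilon)$ we have $\gamma(t) \in \mathrm{int}(M)$, where the definition of a $(\lambda,n+m)$-Einstein manifold forces $f > 0$, giving $h(t) > 0$ and a contradiction. I do not anticipate a serious obstacle: the only mildly delicate point is producing the inward-pointing geodesic, which is handled by taking $v$ to be the inward unit normal to $\partial M$ and using short-time existence of the exponential map. The argument invokes only Equation \eqref{hess} and so applies uniformly to every $m \geq 1$ (the extra assumption $\Delta f = -\lambda f$ in the $m = 1$ case is not needed here).
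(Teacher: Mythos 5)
Your proposal is correct, and it is essentially the standard argument: restricting Equation \eqref{hess} to a unit-speed geodesic turns $f\circ\gamma$ into a solution of a linear second-order ODE, so vanishing of $f$ and $\nabla f$ at a boundary point forces $f\equiv 0$ along an inward geodesic, contradicting $f>0$ on $\mathrm{int}(M)$. The paper itself does not reprove this proposition but cites it from \cite{HPW}, where the proof is the same ODE-uniqueness argument you give; your write-up is complete as stated.
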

Now we generalize Theorem \ref{7.222} to a local characterization which assumes the weaker condition $\nabla f|_p\neq0$ instead of the stronger assumption $p\in O$. 
\begin{mytheo}
\label{improved 7.1}
 Let $m>1$, $\lambda\in\mathbb{R}$ and $(M,g)$ be a Riemannian manifold with a smooth function $f$ defined on $M$. Then the following conditions are equivalent:\\  
 
 1) $(M,g,f)$ satisfies Equation \eqref{hess} of a non-trivial $(\lambda,n+m)$-Einstein metric with harmonic Weyl tensor and $W(\nabla f,.,.,\nabla f)=0$ in a neighborhood of $p\in M$ with $\nabla f|_p\neq0$.\\ 

2) (a) Case $p\in int(M)$: There exist local coordinates $(t,t_1,...,t_{n-1})$ with $t\in(-\varepsilon,\varepsilon)$ in a neighborhood of $p\in M$ and an Einstein Riemannian hypersurface 
$(N,g_N=g_N(t_1,...,t_{n-1}))$ of $(M,g)$ with normalized scalar curvature $\varrho_N=k$ and a function $u=u(t)>0$, in addition $f=f(t)>0$ satisfying \eqref{equ1} and \eqref{equuu} in Proposition \ref{equ} such that\\\\
$I)$ $\enskip g(\partial_t,\partial_t)=1$\\
$II)$ $g(\partial_t,\partial_{t_i})=0,\enskip\enskip\text{for}\enskip i=1,...,n-1$\\
$III)$ $g(\partial_{t_i},\partial_{t_j})=u^2(t)g_N(\partial_{t_i},\partial_{t_j})(t_1,...,t_{n-1})\enskip\text\enskip i,j=1,...,n-1.$\\

(b) Case $p\in\partial M$:  
There exist local coordinates $(t,t_1,...,t_{n-1})$ with $t\in[0,\varepsilon)$ in a neighborhood of $p$ and an Einstein Riemannian hypersurface $(N,g_N=g_N(t_1,...,t_{n-1}))$ of $(M,g)$ with normalized scalar curvature $\varrho_N=k$ and a function $u=u(t)>0$, in addition  $f(t)>0$ for all $t\in(0,\varepsilon)$  satisfying \eqref{equ1} $\&$ \eqref{equuu} as well as $f(0)=0$ satisfying \eqref{equ0} at $t=0$ such  that the conditions $I$, $II$, $III$ in $(a)$ holds.\\\\
Any case of 2) implies that $g=dt^2+u^2(t)g_N$ around $p$.
\end{mytheo}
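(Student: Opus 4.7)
The plan is to prove the equivalence $(1)\Leftrightarrow(2)$ by treating the two implications separately, with the real content in $(1)\Rightarrow(2)$. The conceptual gap to close, which motivates this refinement of Theorem \ref{7.222}, is that Theorem \ref{7.222} requires $p\in O$, i.e.\ $\sigma_1(p)\neq\sigma_2(p)$, whereas here we only assume $\nabla f|_p\neq 0$; the additional situation $\sigma_1(p)=\sigma_2(p)$ must be treated separately.

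For $(2)\Rightarrow(1)$, Proposition \ref{equ} yields Equation \eqref{hess} at once, so $(M,g,f)$ is a $(\lambda,n+m)$-Einstein metric on the given neighborhood. Harmonicity of the Weyl tensor and the identity $W(\nabla f,\cdot,\cdot,\nabla f)=0$ then follow from the standard warped-product curvature identities applied to $g=dt^2+u^2(t)g_N$ with Einstein fibre $(N,g_N)$, using that $\nabla f=f'(t)\partial_t$.

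For $(1)\Rightarrow(2)$ in the interior case, fix a neighborhood $V$ of $p$ on which $\nabla f\neq 0$ and proceed by cases. If $p\in O$, Theorem \ref{7.222} applies directly on a smaller neighborhood of $p$ contained in $O$. If $p$ has a neighborhood $W\subset V$ disjoint from $O$, then $\sigma_1\equiv\sigma_2$ on $W$, so by Lemma \ref{ev2} the Ricci tensor has a single eigenvalue on $W$, making $(W,g)$ Einstein; since $\nabla f\neq 0$ on $W$, Example \ref{einstein case} then furnishes the required warped-product form $g=dt^2+u^2(t)g_N$ with $f=f(t)$. The subtle remaining subcase is $p\in\overline{O}\setminus O$: here Theorem \ref{7.222} provides a local warped-product decomposition near each point of $O\cap V$, and I glue these across $p$ using Lemma \ref{ev2} together with Remark \ref{p6}. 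The point is that at every point of $V$ the tensor $P$ has $\nabla f$ as an eigenvector (trivially so where $P$ has only one eigenvalue), hence $|\nabla f|^2$ is constant on connected components of level sets of $f$ throughout $V$, and the regular level sets form a Riemannian foliation of $V$. Fixing a transverse slice $N$ through $p$ and letting $t$ be arc length along the $\nabla f/|\nabla f|$-flow, one writes $g=dt^2+h_t$ globally on $V$; on $O\cap V$, Theorem \ref{7.222} forces $h_t=u^2(t)g_N$, and by smoothness this identification extends across $p$. The functions $u$ and $f$ then obey \eqref{equ1} and \eqref{equuu} by Proposition \ref{equ}.

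For $p\in\partial M$, Proposition \ref{constantboundary} guarantees $\nabla f|_p\neq 0$, so the interior analysis applies to a one-sided neighborhood parameterized by $t\in[0,\varepsilon)$ with $t=0$ corresponding to $\partial M=\{f=0\}$; the warped-product decomposition extends smoothly to $t=0$, and the boundary constraints $f(0)=0$ together with $f''(0)=u'(0)=0$ are precisely Equation \eqref{equ0} of Proposition \ref{equ}. The main obstacle throughout is the gluing step in the subcase $p\in\overline{O}\setminus O$: the local warped-product decompositions coming from Theorem \ref{7.222} are a priori defined only on the open set $O\cap V$, and one must ensure they are all governed by a single, globally defined foliation by level sets of $f$. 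Remark \ref{p6} is the essential ingredient that makes this compatibility automatic, by guaranteeing that the level-set foliation behaves well on all of $V$ rather than only on $O\cap V$.
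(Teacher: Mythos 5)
Your overall strategy matches the paper's: use Lemma \ref{ev2} and Remark \ref{p6} to obtain a global level-set foliation near $p$, build the arc-length function $t$, get the warped product on $O$, and extend across the points where the Schouten tensor degenerates. However, your case analysis has a genuine gap in the subcase $p\in\overline{O}\setminus O$. Your gluing step --- ``on $O\cap V$, Theorem \ref{7.222} forces $h_t=u^2(t)g_N$, and by smoothness this identification extends across $p$'' --- is an extension by continuity, and it is only legitimate if $O\cap V$ is dense in a neighborhood of $p$. Nothing in your three-way split rules out the mixed situation in which every neighborhood of $p$ contains both points of $O$ and a nonempty open set on which $\sigma_1\equiv\sigma_2$; in that situation the closure of $O\cap V$ is a proper subset of $V$ and the continuity argument does not reach all of $V$. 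The paper closes exactly this hole by invoking the analyticity of the metric (\cite[Proposition 2.4]{HPW}): if $\sigma:=\sigma_1-\sigma_2$ vanishes on an open subset of the connected neighborhood $\widetilde{\mathcal{U}}$, then it vanishes identically, so either the whole neighborhood is Einstein (your second case, handled by Example \ref{einstein case}) or $O\cap\widetilde{\mathcal{U}}$ is dense. You need this dichotomy --- or, alternatively, an argument that the evolution equation $\frac{d}{dt}h_t=\frac{2\mu_2}{f'}h_t$ and the constancy of $\mu_2$ on level sets hold on all of $V$ and not merely on $O\cap V$ --- before the extension step is valid.

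Two smaller omissions. In $(2)\Rightarrow(1)$ you never verify that $\nabla f|_p\neq0$, which is part of statement 1); the paper derives it by contradiction, from Proposition \ref{constantboundary} in the boundary case and, in the interior case, from the fact that a critical point at the origin of geodesic polar coordinates would force $u(t(p))=0$, contradicting $u>0$. And in the boundary case your appeal to ``the interior analysis'' should make explicit that the level set through $p$ is a piece of $\partial M=f^{-1}(0)$, that the trajectories of $\nabla f$ point to one side only, and that Equation \eqref{equ0} at $t=0$ comes from the vanishing of $\mathrm{Hess}\,f$ on $\partial M$ combined with Proposition \ref{constantboundary}, which is how Proposition \ref{equ} produces $f''(0)=u'(0)=0$ there.
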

\begin{proof}
1) $\Rightarrow$ 2): By Lemma \ref{ev2} the Schouten tensor has at most two eigenvalues $\sigma_1$ and $\sigma_2$ at each point in a neighborhood $\widetilde{\mathcal{U}}$ of $p$ consisting of regular points of $f$. If $\sigma_1=\sigma_2$ in an open  subset $U$ of $\widetilde{\mathcal{U}}$ then via Schur's lemma $g$ is Einstein on $U$. In addition, the derivative of $\sigma=\sigma_1-\sigma_2=0$ vanishes on $U$. By analyticity of $g$, see \cite[Proposition 2.4]{HPW}, $d\sigma$ and hence $\sigma$ vanish on $\widetilde{\mathcal{U}}$. Therefore it would be Einstein, i.e. $\sigma_1=\sigma_2$, on the whole $\widetilde{\mathcal{U}}$. In this case, using Example \ref{einstein case} we see that the conclusion of this theorem is satisfied. Therefore we can assume that the open set $O\cap\widetilde{\mathcal{U}}$ is dense in $\widetilde{\mathcal{U}}$.

For more convenience we first suppose that $p\in int(M)$. Without loss of generality assume $p\in O\cap\widetilde{\mathcal{U}}$ (otherwise we may start by a point $p_1\in O\cap\widetilde{\mathcal{U}}$ and through the same procedure as follows we get the same result on $\widetilde{\mathcal{U}}\ni p$), thus the Schouten tensor $S$ has two different eigenvalues $\sigma_1$ and $\sigma_2$ in a neighborhood $\mathcal{U}$ of $p$ in $\widetilde{\mathcal{U}}$. By Lemma \ref{ev2} we know also that the dimension of the eigenspace corresponding to $\sigma_2$ is bigger than one when dim$M>1$, hence \cite[16.11(iii)]{B} proves that $\sigma_2$ is locally constant on the level sets of $f$ in $\mathcal{U}$. 

As the Schouten tensor $S$ has two distinct eigenvalues in $\mathcal{U}$, via the relation $Hess f=\frac{f}{m}(S+(\frac{Scal}{2(n-1)}-\lambda)g)$ it follows that Hess$f$ has also two 
distinct eigenvalues in $\mathcal{U}$, call them $\mu_1$ and $\mu_2$, where the eigenspaces for $\mu_i$ correspond to eigenspaces for $\sigma_i$ by 
\begin{align}
\label{regular151}
 \mu_i=\frac{f}{m}(\sigma_i+\frac{Scal}{2(n-1)}-\lambda)\enskip\enskip i=1,2.
\end{align}
We already know by Remark \ref{p6} that $|\nabla f|$ is locally constant on the level sets of $f$ in $\mathcal{U}$ which in turn concludes that $\mu_1$ is also locally constant on the level sets of 
$f$ in $\mathcal{U}$. In more details, from
$$\frac{1}{2}D_{\nabla f}|\nabla f|^2=Hess f(\nabla f,\nabla f)=\mu_1|\nabla f|^2$$
we get
$$\mu_1=\frac{1}{2}\frac{1}{|\nabla f|^2}D_{\nabla f}|\nabla f|^2$$
and hence
\begin{align}
 D_X\mu_1=\frac{1}{2}\frac{1}{|\nabla f|^2}D_XD_{\nabla f}|\nabla f|^2=\frac{1}{2}\frac{1}{|\nabla f|^2}D_{\nabla f}D_X|\nabla f|^2=0,\enskip X\perp \nabla f.\label{why isotropic}
\end{align}
Moreover if $X\perp \nabla f$ then
$$D_X\rho=\frac{2}{f}P(\nabla f,X)=0$$
expressing that $\rho$ and hence the scalar curvature $Scal$ are locally constant on the level sets of $f$, hence by Equation \eqref{regular151} $\sigma_1$ and $\mu_2$ are  
locally constant on the level sets of $f$. So $|\nabla f|^2$, $\mu_1$ and $\mu_2$ are all locally constant on the level sets of $f$ in $\mathcal{U}$.\\

Let $c:=f(p)$ and $N\subset\mathcal{U}$ be the connected component of $f^{-1}(c)$ containing $p$ in $\mathcal{U}$. Since $|\nabla f|\neq0$ on $N$ it follows that $(N,g_N)$ is a Riemannian hypersurface of $(M,g)$. 
One can choose a coordinate chart $(t_1,...,t_{n-1})$ on the level hypersurface $N$. We are interested to extend this chart to a neighborhood of $p$ in $M$ using $f$. For that purpose, we note that as 
the norm $|\nabla f|$ is locally constant on the level sets of $f$ in $\mathcal{U}$ it may be considered as a function of $f$, i.e. $|\nabla f|=|\nabla f|(f)$. It follows that
\begin{align}
\label{localclosed}
d(\frac{1}{|\nabla f|}df)=d(\frac{1}{|\nabla f|})\wedge df=-\frac{d|\nabla f|}{|\nabla f|^2}df\wedge df=0
\end{align}
meaning that $\frac{1}{|\nabla f|}df$ is locally closed and hence exact in $\mathcal{U}$. Therefore, there exists a smooth function $t$ on $\mathcal{U}$ such that 
\begin{align}
 dt=\frac{1}{|\nabla f|}df\enskip\text{with }t=t(f)=\int^{f}_{N}\frac{df}{|\nabla f|}\label{distance0}
\end{align}
respectively $|\nabla t|=1$.
Additionally, the symmetry of Hess $t$ together with the equation $|\nabla t|=1$ implies
$$Hess t(\nabla t,X)=g(\nabla_{\nabla t}\nabla t,X)=g(\nabla_X\nabla t,\nabla t)=\frac{1}{2}\nabla_X|\nabla t|^2=0;\enskip X\in TM$$
from which by the non-degeneracy property of $g$ it follows that $\nabla_{\nabla t}\nabla t=0$. Accordingly the trajectories of 
$\frac{\nabla f}{|\nabla f|}$, i.e. integral curves of $\nabla t$, are geodesics which are normal to level sets of $f$ in $\mathcal{U}$.

Consequently we may extend the coordinate chart on $N$ to geodesic parallel coordinates $(t,t_1,...,t_{n-1})$ in a neighborhood of $p$ satisfying:\\
- the $t$-lines are geodesics with $t$ as arc length.\\
 -$\frac{\partial}{\partial t}$ is orthogonal to every set $\{(t,t_1,...,t_{n-1})|\enskip t=constant\}$, i.e. $g(\frac{\partial}{\partial t}, \frac{\partial}{\partial t_i})=0,
 \enskip i=1,...,n-1$.\\
 This shows that the different $t$-levels are parallel to each other and the distance between them equals the difference of $t$-values.
 
Now consider the $f$-levels $\{q|\enskip f(q)=constant\}$ where the $t$-level containing $p$, for which $t=0$, coincides with $f^{-1}(c)$. 
As $|\nabla f|$ is constant along the level sets of $f$ in $\mathcal{U}$, see Remark \ref{p6}, they are also parallel to each other. Therefore, the $t$-levels coincide with the $f$-levels 
and we can consider $f$ as a function of $t$ alone: 
\begin{align}
 f(t,x)=f(t)\enskip\text{and}\enskip \nabla f(t,x)=f'(t)\frac{\partial}{\partial t}\label{alone22}
\end{align}
where $f>0$ because $M$ is boundaryless. Since $\mu_1$ and $\mu_2$ are locally constant on the level sets of $f$ in $\mathcal{U}$ they may also be considered 
as functions of $f$ by which \eqref{alone22} it follows then that they are functions of $t$, e.g. $\mu_2(t)=(\mu_2of)(t)$, which would be given then by \eqref{equ3} $\&$ \eqref{30}.

From $dt=\frac{1}{|\nabla f|}df$, i.e. $t$ a distance function, it follows that the metric $g$ in $\mathcal{U}\ni p$ can be decomposed into 
\begin{align}
 g=\frac{1}{|\nabla f|^2} df\otimes df+g_f\label{metric01}
\end{align}
where $g_f$ represents a Riemannian metric on a level set of $f$ in $\mathcal{U}$ with tangent space orthogonal complement to the space generated by the unit normal vector field 
$\frac{\nabla f}{|\nabla f|}$. By \eqref{metric01} and the fact that the eigenvalue $\mu_1$ of Hess$f$ corresponds to $\nabla f$ and $\mu_2$ corresponds to vector fields orthogonal to it, we obtain  
\begin{align}
 Hessf=\mu_1.\frac{1}{|\nabla f|^2}df\otimes df+\mu_2g_f.\label{decompose01}
\end{align}
From Equation \eqref{decompose01} one obtains 
$\Lb_{\nabla f}g_f=2Hessf|_{g_f}=2\mu_2g_f$ by which Equation \eqref{metric01} gives out a local warped structure in $\mathcal{U}$ in terms of $t$: 

We replace the first term on the line element \eqref{metric01} by $\frac{df}{|\nabla f|}=dt$ giving 
\begin{align}
 g=dt^2+g_t\label{formone}
\end{align}
where $t=0$ corresponds to $N\subset f^{-1}(c)$. Now we work on $g_t$ to acquire the desired structure. Let $X$ be a lift of a vector field on $N$, then $g(\partial t,X)=0$ by the Gauss-Lemma. 
Also for vectors $X_1$, $X_2$ tangent to $N$ at 
$x_0$ let $X_i(t)=d$ $exp(t,x_0)(X_i),i=1,2$ then
\begin {align}
&\frac{d}{dt}|_{t=s}g(X_1,X_2)(t)=\Lb_{\partial t}g(X_1,X_2)(s)=\frac{1}{f'(s)}\Lb_{\nabla f}g(X_1,X_2)(s)\nonumber\\
 &=\frac{2}{f'(s)}Hessf(X_1,X_2)(s)=\frac{2}{f'(s)}\mu_2(s) g(X_1,X_2)(s)\label{independent01}
\end {align}
where $\Lb_Zg(X,Y)=g(\nabla_XZ,Y)+g(X,\nabla_YZ)$ is the Lie derivative of the metric in direction of the vector field $Z$. By an integration step from \eqref{independent01} we obtain
$$g_t=\Big(e^{\int^t_0\frac{1}{f'(s)}\mu_2(s)ds}\Big)^2g_{N\subset f^{-1}(c)}.$$  
Therefore we may write \eqref{formone} as the warped structure 
\begin{align}
 g=dt^2+u^2(t)g_N, \enskip t\in(-\varepsilon,\varepsilon)\label{warped01}
\end{align}
where
\begin{align}
 u(t)=e^{\int^t_0\frac{1}{f'(s)}\mu_2(s)ds}.\label{power01}
\end{align}
To confirm \eqref{warped01} is a warped product metric it remains to show that $g_N$ is independent of $t$, and also, is non-degenerate. For $X_1$ and $X_2$ as above, the mapping 
$t\longmapsto (u(t))^{-2}g(X_1,X_2)(t)$ satisfies the differential equation
\begin{align}
 &(\frac{g(X_1,X_2)}{u^2})'(t)=\frac{\frac{d}{ds}|_{s=t}g(X_1,X_2)(s)}{u^2(t)}-\frac{2u'(t)}{u^3(t)}g(X_1,X_2)(t)=0
\end{align}
expressing $g_N(X_1,X_2)=\frac{g(X_1,X_2)}{u^2(t)}$ is independent of the coordinate function $t$ and hence one may introduce this expression as $g_N(t_1,...,t_{n-1})$. To see $g_N$ is 
non-degenerate, suppose $g_N(X,Y)=0$ for some $X$ and all $Y$ tangent to $N$. On the other hand by \eqref{warped01} we have $g(X,\partial t)=0$. As the metric $g$ is non-degenerate we 
obtain $X=0$. By Proposition \ref{equ} the equations \eqref{equ1} and \eqref{equuu} are satisfied for a boundaryless manifold.

Now suppose that $p\in\partial M$. Then through the same discussion as above and noting that due to $N\subseteq\partial M$ the trajectory geodesics of $\nabla f$ starting at $N$ point only to one side, e.g. to its positive side, it follows that there exist geodesic parallel  coordinates $(t,t_1,t_2,...,t_{n-1})$ in a neighborhood of $p$ with $t\in[0,\epsilon)$, $t(p)=0$ for which the result holds. Moreover the conditions \eqref{equ1}, \eqref{equuu}, \eqref{equ0} in Proposition \ref{equ} hold on $\mathcal{U}$.

Considering the way of defining the function $t$ at above, we see that $t$ is smooth as far as $\nabla f\neq0$, cf. Equation \eqref{distance0}, including the points at which $\sigma_1=\sigma_2$.

By smoothness of the metric $g$ and $t$ it follows that we have a warped product, where $g_N$ is an Einstein Riemannian hypersurface of $(M,g)$, along all of $t$ as long as $\nabla f\neq0$, therefore on the whole $\widetilde{\mathcal{U}}$. In parallel, to extend the relations $f=f(t)$, and \eqref{equ1},  \eqref{equuu}, \eqref{equ0} as far as $\nabla f\neq0$ (so on the whole $\widetilde{\mathcal{U}}$) we use again a similar discussion as above.

Now we see that any metric of this form whose Schouten and hence the Ricci tensor has two distinct eigenvalues must have $g_N$ Einstein. By Lemma \cite[Lemma 2.5.(21)]{KR3}
the first eigenvalue of the Ricci tensor would be $\gamma_1=-(n-1)\frac{u''(t)}{u(t)}$. As the second eigenvalue $\gamma_2$ corresponds to vectors $X,Y\perp\partial_t$, using Lemma \cite[Lemma 2.5.(19)]{KR3} we have
$$Ric(X,Y)=\gamma_2g(X,Y)=Ric_N(X,Y)-\frac{1}{u^2}[(n-2)u'^2+uu'']g(X,Y)$$
thus
$$Ric_N(X,Y)=(\gamma_2+\frac{1}{u^2}[(n-2)u'^2+uu''])g(X,Y)$$
implying that $g_N$ is Einstein. Therefore the conclusion of the theorem is satisfied on the whole $\widetilde{\mathcal{U}}$.\\ 

2) $\Rightarrow$ 1): Suppose by contradiction that $\nabla f(p)=0$.
If it is Case $(b)$ then by Proposition \ref{constantboundary} it is a contradiction. If it Case $(a)$, then we may consider geodesic polar coordinates with origin at $p$, hence it must satisfy $u(a)=0$, $a=t(p)$. This is a contradiction with the assumption. 

Let the metric be isometrically $g=dt^2+u^2(t)g_N$ where $g_N$ is an Einstein hypersurface with $\varrho_N=k$, and equations \eqref{equ1}, \eqref{equuu}, \eqref{equ0} be satisfied by $u=u(t)>0$ and  $f=f(t)\geq0$. Then Proposition \ref{equ} implies that $(M,g,f)$ satisfies Equation \eqref{hess} of $(\lambda,n+m)$-Einstein manifolds around $p$.

By \cite[16.26(i)]{B} the manifold $(M,g)$ has harmonic Weyl tensor and satisfies $W(\nabla f, ., ., \nabla f)=0$ around $p$.
\end{proof}
In the Riemannian case, Catino proved existence of a local warped product metric with $(n-1)$ dimensional Einstein fibre around regular points of $f$ in a $(\lambda,n+m)$-Einstein manifold $(M,g,f)$, see \cite[Theorem 1.1]{Catino}. In this Theorem Catino assumes harmonicity of the Weyl tensor $W$ and $W(\nabla f,.,.,.)=0$. The assumption $W(\nabla f,.,.,.)=0$ by Catino is stronger than the corresponding one, i.e. $W(\nabla f,.,.,\nabla f)=0$, in \cite[Theorem 7.1]{HPW} and in Theorem \ref{improved 7.1}. Also, the condition $p\in O$ in \cite[Theorem 7.1]{HPW} is stronger than the condition $\nabla f|_p\neq0$ in Theorem \ref{improved 7.1}. Therefore, Theorem \ref{improved 7.1} is a stronger and more general result.
\begin{mylem}
\label{u and f r}
Let $m>1$, $\lambda\in\mathbb{R}$ and $(M,g,f)$ satisfies Equation \eqref{hess} of a nontrivial $(\lambda,n+m)$-Einstein manifold with harmonic Weyl tensor and $(\nabla f,.,.,\nabla f)=0$. In addition, suppose that for a coordinate system $(t,t_1,t_2,...,t_{n-1})$ the metric is of the form 
\begin{align}
g=dt^2+u^2(t)g_N,\enskip t\in[-\epsilon,\epsilon],\enskip\epsilon\in\mathbb{R}^+\label{warped product r}
\end{align}
where $u(t)>0$ on $(-\epsilon,\epsilon)$ 
and the function satisfies
\begin{align}
 f(t,x)=f(t),\enskip x\in N.\label{f formul}
\end{align}
If $u(a)=0$, $a\in\{-\epsilon,\epsilon\}$, then $f'(a)=0$.   
\end{mylem}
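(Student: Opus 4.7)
The plan is to exploit the smoothness of $f$ at the point $p\in M$ to which the warped product collapses at $t=a$. Since $g = dt^2 + u^2(t)g_N$ must be nondegenerate on all of $M$, the vanishing $u(a)=0$ forces the slice $\{t=a\}\times N$ to be identified with a single point $p\in M$: otherwise $g$ would annihilate the $(n-1)$-dimensional distribution of vectors tangent to that slice, contradicting the Riemannian property. Working in $M$ near $p$, the curves $s\mapsto (a+s,x)$ are unit-speed geodesics through $p$ (because $g(\partial_t,\partial_t)=1$), and smoothness of the Riemannian structure across the collapse forces their initial tangent vectors to exhaust the entire unit sphere of $T_pM$ as $x$ varies over $N$.

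Next, for any unit $v\in T_pM$, I would follow the geodesic $\gamma_v(s)=\exp_p(sv)$. For $s>0$ it coincides with the radial warped-product curve in the direction $v$, so $t(\gamma_v(s))=a+s$; for $s<0$ it equals $\gamma_{-v}(|s|)$, the radial curve in direction $-v$, so $t(\gamma_v(s))=a+|s|$. Since $f(t,x)=f(t)$ depends only on $t$, this produces $f\circ\gamma_v(s)=f(a+|s|)$ in a neighborhood of $s=0$. Because $f$ is $C^1$ at $p$, the derivative $\tfrac{d}{ds}\big|_{s=0} f(\gamma_v(s))$ must exist as a \emph{two-sided} limit; the right-sided value is $f'(a)$ while the left-sided value is $-f'(a)$, and equality forces $f'(a)=0$, which is the desired conclusion.

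The main delicate step is the geometric assertion that the chart-radial directions fill out all of $T_pM$ — equivalently, that every geodesic emanating from $p$ (in either direction) can be realized as a warped-product radial curve. This is the analogue at $t=a$ of the Gauss lemma at the origin of geodesic polar coordinates, and it is what singles out $t=a$ as a mere coordinate singularity rather than a genuine lower-dimensional stratum of $M$. A secondary point is that $p$ is necessarily an interior point: a tip of a warped cone cannot sit on $\partial M$, because $f=0$ and $\nabla f\neq 0$ on $\partial M$ (Proposition \ref{constantboundary}) force $\partial M$ to be a smooth embedded hypersurface near each of its points, which is incompatible with the collapse structure at $p$.
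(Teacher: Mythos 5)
Your argument is correct in outline but follows a genuinely different route from the paper. The paper's proof is a two-line contradiction: if $f'(a)\neq 0$ then the point over $t=a$ is a regular point of $f$, so Theorem \ref{improved 7.1} produces a local warped product with \emph{positive} warping function across $t=a$, contradicting $u(a)=0$. You instead argue directly: the slice $\{t=a\}\times N$ collapses to a single interior point $p$, every geodesic through $p$ is radial, hence $f\circ\gamma(s)=f(a\pm|s-a|)$ near $p$, and $C^1$-smoothness of $f$ on $M$ forces the one-sided derivatives $\pm f'(a)$ to agree, i.e.\ $f'(a)=0$. This is the same mechanism the paper invokes elsewhere (via \cite[1.4.4]{P}) to show $u$ is odd at critical points, and it has the virtue of exposing the real reason the lemma is true without appealing to the heavy local characterization theorem; note also that a single geodesic through $p$ already suffices, so you do not actually need the tangent directions of the radial curves to exhaust all of $T_pM$. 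The one step you flag as delicate --- that $u(a)=0$ forces the slice to be a single point of $M$ --- is not fully established by your non-degeneracy remark, which only shows the coordinate map cannot be an immersion at $t=a$; it is closed by the distance estimate the paper uses in Lemma \ref{isolated critical}, namely $d_M\bigl((a,x_1),(a,x_2)\bigr)\leq\lim_{t\to a}\bigl(2|t-a|+u(t)\,d_N(x_1,x_2)\bigr)=0$. With that patch, and with your (correct) observation that $p$ cannot lie on $\partial M$ because Proposition \ref{constantboundary} would otherwise give $\nabla f(p)\neq 0$ at a point where the boundary cannot be a smooth hypersurface, your proof is complete and arguably more self-contained than the paper's.
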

\begin{proof}
By contradiction suppose that $f'(a)\neq0$. Then by the proof of Theorem \ref{improved 7.1} we see that the warped product is extendable to a neighborhood of $t=a$, hence the warping function must satisfy $u(a)>0$, which is a contradiction to the assumption $u(a)=0$.
\end{proof}
\begin{mycor}
\label{as far as} 
Under the same assumptions as Theorem \ref{improved 7.1}.1) we have $u(t)>0$ if and only if $f'(t)\neq0$ (equivalently $u(t_0)=0$ if and only if $f'(t_0)=0$ where $t_0$ is the first zero for them ). 
\end{mycor}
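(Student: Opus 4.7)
The plan is to handle the iff via two separate implications. The direction ``$f'(t)\neq 0 \Rightarrow u(t)>0$'' is the contrapositive of Lemma~\ref{u and f r} and needs no new work: if $u$ vanishes at $t$, the lemma forces $f'(t)=0$. So the substance of the corollary lies in the reverse direction, which the ``first zero'' phrasing localizes to a single point.

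For that reverse direction I would argue by contradiction. Assume that $t_0$ is the first zero of $f'$ past $p$ and that $u(t_0)>0$. On $(0,t_0)$ the warped product $g=dt^2+u^2 g_N$ is valid, so the equations \eqref{equ1} and \eqref{equuu} from Proposition~\ref{equ} hold there and, by smoothness, extend to $t=t_0$. Evaluating~\eqref{equ1} at $t_0$ together with $f(t_0)>0$ forces the vanishing of the coefficient
\[
c(t_0)=\lambda-\frac{(n-2)k-(n-2)u'(t_0)^2-u(t_0)u''(t_0)}{u(t_0)^2},
\]
i.e.\ $\lambda u(t_0)^2+(n-2)u'(t_0)^2+u(t_0)u''(t_0)=(n-2)k$. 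Next, under $c(t_0)=0$ the compatibility relation~\eqref{equ100} collapses to $a(t_0)b(t_0)=c'(t_0)$, producing one algebraic identity among $u,u',u'',u'''$ at $t_0$; substituting the above value of $(n-2)k$ back into~\eqref{equuu} gives a second such identity. After the obvious cancellations these should combine into a single scalar constraint of the shape $\lambda\,u(t_0)\,u'(t_0)=0$, and since $u(t_0)>0$ this leaves only $\lambda=0$ or $u'(t_0)=0$. The case $\lambda=0$ can be ruled out separately by exploiting that the resulting reduction $f''=-\tfrac{\lambda}{m}f=0$ makes $f$ affine in $t$ on $(0,t_0)$, precluding an interior critical point with $f>0$.

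The main obstacle is precisely the degenerate branch $u'(t_0)=0$: many terms of~\eqref{equuu} then drop out and the immediate algebraic comparison is inconclusive. My plan here is to iterate---differentiating \eqref{equ1} and \eqref{equuu} at $t_0$ order by order forces the successive vanishing of $u''(t_0),u'''(t_0),\dots$, and invoking real-analyticity of the metric (see \cite[Proposition~2.4]{HPW}) one concludes that $u$ is constant on a neighborhood of $t_0$. A constant warping function reduces the picture to the Einstein case already disposed of at the start of the proof of Theorem~\ref{improved 7.1} via Example~\ref{einstein case}, contradicting that $t_0$ is a genuinely new boundary of the warped-product domain coming from the non-trivial two-eigenvalue structure of $P$. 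Verifying that this iterative cancellation really terminates in a contradiction---rather than admitting a spurious product-type solution still compatible with the harmonic Weyl and $W(\nabla f,.,.,\nabla f)=0$ hypotheses---is where the delicate part of the argument sits.
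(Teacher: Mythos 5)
Your first implication ($f'(t)\neq0\Rightarrow u(t)>0$ as the contrapositive of Lemma~\ref{u and f r}) is exactly half of the paper's argument. The paper's entire proof is a one-line combination of Lemma~\ref{u and f r} with Theorem~\ref{improved 7.1}: the remaining implication ($f'(t_0)=0$ at an interior point forces $u(t_0)=0$) is taken from the direction 2)~$\Rightarrow$~1) of that theorem, where the warped-product chart at a critical point is read as geodesic polar coordinates centred there, so the warping function must vanish. You replace that geometric step by an ODE analysis at $t_0$, and this is where the proposal breaks down.

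Concretely, the pivotal identity is not what you claim. Writing $A=\lambda u^2$, $B=(n-2)k$ and substituting $c(t_0)=0$, i.e.\ $A-B=-\bigl[(n-2)u'^2+uu''\bigr]$, into \eqref{equuu}, the coefficients of $u'^4$ and of $u^2u''^2$ cancel and what survives is
$$-m\,u\,u'\bigl[(n-2)u'u''+u\,u'''\bigr]=0,$$
which for $u(t_0)>0$ only yields $u'(t_0)=0$ or $(n-2)u'u''+uu'''=0$; the constant $\lambda$ does not appear, so there is no constraint of the shape $\lambda uu'=0$. Your dismissal of $\lambda=0$ also misquotes the first necessary condition: \eqref{equ31} reads $f''=-\bigl(\tfrac{n-1}{m}\tfrac{u''}{u}+\tfrac{\lambda}{m}\bigr)f$, so $\lambda=0$ does not make $f$ affine. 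Most seriously, the degenerate branch $u'(t_0)=0$ that you hope to kill by iterated differentiation and analyticity cannot be eliminated by any local ODE argument: take $g=dt^2+u_0^2g_N$ with $g_N$ Einstein, $\lambda>0$ and $\lambda u_0^2=(n-2)k$, and $f(t)=C\cos(\sqrt{\lambda/m}\,t)$. This satisfies \eqref{equ1} and \eqref{equuu} identically, satisfies Equation \eqref{hess}, has harmonic Weyl tensor and $W(\nabla f,.,.,\nabla f)=0$, and has $f'(0)=0$ while $u\equiv u_0>0$. Your intended endgame --- that a constant warping function ``reduces to the Einstein case'' of Example~\ref{einstein case} --- is false: such a product has Ricci eigenvalues $0$ and $\lambda$ and is not Einstein, so no contradiction is reached. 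This is exactly why the paper routes the second implication through the polar-coordinate argument of Theorem~\ref{improved 7.1} rather than through the differential equations, and it shows that the difficulty you flag at the end of your proposal is not a technicality but an actual obstruction to your approach.
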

\begin{proof}
By combination of Theorem \ref{improved 7.1}, Lemma \ref{u and f r} the result follows.
\end{proof}
\section{Local characterization of Riemannian $(\lambda,n+m)$-Einstein metrics $(M,g,f)$ around critical points of $f$}
In the Riemannian setting, when using geodesic polar coordinates $(t,x)\in\mathbb{R}\times S(1)$ we consider local warped product metrics of the form $dt^2+u^2(t)g_1(x)$, $(t,x)\subset\mathbb{R}\times S^{n-1}(1)$, where $g_1$ is the induced metric on the standard sphere $S^{n-1}(1)$. Hence at critical points of $f$ being located at the origin of local geodesic polar coordinates the warping function vanishes, i.e. $u(0)=0$.

In this subsection we first show that under the assumptions of harmonicity of the Weyl tensor and $W(\nabla f,.,.,\nabla f)=0$, critical points of $f$ in a Riemannian $(\lambda,n+m)$-Einstein metric, in geodesic polar coordinates with origin located at the critical points, are isolated and the level sets close to critical points are isometric to spheres. Around critical points of $f$, under some conditions on the warping function of a given warped product the metric is conformally flat. Via these properties we then locally characterize non-trivial $(\lambda,n+m)$-Einstein manifolds $(M,g,f)$ around critical points of $f$.
\begin{mylem}
\label{isolated critical}
Let $m>1$, $\lambda\in\mathbb{R}$ and $(M,g)$ be a connected Riemannian manifold with a smooth function $f$ defined on $M$. Assume that $(M,g,f)$ satisfies Equation \eqref{hess} of a non-trivial $(\lambda,n+m)$-Einstein metric with harmonic Weyl tensor satisfying $W(\nabla f,.,.,\nabla f)=0$ in a neighborhood of $p\in M$ with $\nabla f|_p=0$. Then there exists a neighborhood $\mathcal{U}$ of $p$ such that
\\ 

(i) $p$ is the only critical point of $f$ in $\mathcal{U}$,\\

(ii) The level hypersurfaces of $f$ in $\mathcal{U}$ coincide with the geodesic distance spheres around $p$.
\end{mylem}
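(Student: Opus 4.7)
The strategy is to establish, in a small enough neighborhood $\mathcal{U}$ of $p$, a warped product decomposition $g = dt^2 + u^2(t)\,g_N$ in which $t$ is the geodesic distance from $p$ and $f = f(t)$ is radial. Once this is in place, (i) follows because $\nabla f = f'(t)\,\partial_t$, so critical points in $\mathcal{U}\setminus\{p\}$ correspond to zeros of $f'(t)$ in $(0,\varepsilon)$, which are isolated by analyticity unless $f'\equiv 0$ (the latter excluded by non-triviality), while (ii) follows because the level hypersurfaces $\{f = c\}$ are then the sets $\{t = t_c\}$, i.e., the geodesic distance spheres around $p$.

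I would begin by noting that $p \in \text{int}(M)$ by Proposition \ref{constantboundary}, and that every punctured neighborhood of $p$ contains regular points of $f$: real-analyticity of $(M,g,f)$, inherited from \cite[Proposition 2.4]{HPW} via \eqref{hess}, forces a locally constant $f$ to be globally constant, contradicting non-triviality. Pick a regular point $q$ close to $p$ and apply Theorem \ref{improved 7.1}: in a neighborhood of $q$ one has $g = dt'^2 + u^2(t')\,g_N$ with $f = f(t')$, and the $t'$-curves are the unit-speed geodesics generated by $\nabla f/|\nabla f|$. I would then trace a $t'$-curve backward from $q$, using that $|\nabla f|^2$ depends only on $f$ on the regular set (Remark \ref{p6}); the arc-length $t'$ is given by $\int df/|\nabla f|$, and as the trajectory approaches a critical level $|\nabla f|\to 0$ the integral converges to a finite limit $t_0$, at which by Corollary \ref{as far as} one simultaneously has $u(t_0) = 0$. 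The limit of the trajectory is a critical point of $f$, and by connectedness of the regular component adjacent to $p$ together with analyticity it must coincide with $p$ itself. Relabeling $t = t' - t_0$ makes $t$ the arc-length from $p$ along each such geodesic, so the pair $(t,x)$ is a geodesic polar coordinate system at $p$; smoothness of $g$ at $p$ then forces $(N,g_N)$ to be the round unit sphere together with $u(0)=0$, $u'(0)=1$. In these coordinates $f = f(t)$ on the whole of $\mathcal{U}$, and choosing $\varepsilon$ smaller than the first positive zero of the analytic function $f'(t)$ yields both conclusions.

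The main obstacle I anticipate is the extension/gluing step: patching the local warped-product decomposition around a single regular point $q$ obtained from Theorem \ref{improved 7.1} into a coherent decomposition on the punctured ball $B_\varepsilon(p)\setminus\{p\}$ in which $t$ is identically the geodesic distance from $p$, for every radial direction at $p$. In the generic case $\text{Hess}_p f \neq 0$ this is straightforward, since $|\nabla f|^2$ has a simple zero in $f - f(p)$ and the backward $t'$-geodesic reaches $p$ in finite arc-length. The degenerate case $\text{Hess}_p f = 0$ (equivalently $\text{Ric}_p = \lambda g_p$) is subtler and would need a higher-order analyticity argument, using the ODE system \eqref{equ1}--\eqref{equuu} to rule out the possibility that $f'$ vanishes to infinite order at the tip, which would force $f$ to be constant and again contradict non-triviality.
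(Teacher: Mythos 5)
Your overall strategy---reduce to a radial warped product $g=dt^2+u^2(t)g_N$, $f=f(t)$ in geodesic polar coordinates at $p$ and then read off (i) and (ii)---is the same as the paper's, and the ingredients you invoke (Theorem \ref{improved 7.1} near regular points, Remark \ref{p6}, Corollary \ref{as far as}, the fact that gradient trajectories are geodesics) are the right ones. But the step you yourself flag as the ``main obstacle'' is exactly the content of the lemma, and you do not close it: asserting that the backward limit of \emph{every} gradient trajectory near $p$ ``must coincide with $p$ itself by connectedness and analyticity'' is not an argument. A priori the critical set could be larger than a single point, different radial trajectories could terminate at different critical points, or a trajectory could fail to reach the critical level in finite arc length; connectedness of the regular set does not by itself exclude any of this, and ruling it out is precisely statement (i). Your secondary worry---convergence of $\int df/|\nabla f|$ in the degenerate case $\mathrm{Hess}_pf=0$---is likewise named but not resolved.

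The paper closes both gaps with one device that is absent from your proposal. Fix a regular level $\mathcal{A}$ through a nearby regular point $q$ and a point $q_0\in\mathcal{A}$ realizing $s_0=d(p,\mathcal{A})>0$. The minimizing geodesic $\gamma_0$ from $p$ to $q_0$ has finite length by construction (no integral of $df/|\nabla f|$ is needed), meets $\mathcal{A}$ perpendicularly by the Gauss lemma, and therefore coincides up to parameterization with the gradient trajectory through $q_0$; so at least one trajectory demonstrably reaches $p$. For any other trajectory $\gamma_1$ issuing from $q_1\in\mathcal{A}$, the warped product structure on the regular part gives the intrinsic distance comparison $d_M(\gamma_0(s),\gamma_1(s))\le d_{\mathcal{A}(s)}(\gamma_0(s),\gamma_1(s))=\frac{u(s)}{u(s_0)}\,d_{\mathcal{A}(s_0)}(q_0,q_1)$, and since $u(s)\to 0$ as $s\to 0$ all trajectories emanating from $\mathcal{A}$ collapse to the single point $p$. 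This simultaneously shows that $\mathcal{A}$ lies on the geodesic sphere of radius $s_0$, that arc length along the trajectories is the distance to $p$, and hence that $p$ is the unique critical point nearby; a separate short case deals with a possible intermediate critical point on $\gamma_0$. If you add this contraction estimate, your outline becomes a proof; without it the gluing step is a genuine gap.
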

\begin{proof}
At first we consider geodesic polar coordinates with origin at $p$. We choose $\mathcal{U}$ such that every point in $\mathcal{U}$ has a unique and shortest geodesic joining it with $p$. Then we consider $q$ to be a regular point of $f$. By Remark \ref{p6} we have  $f(p)\neq f(q)$. Let  $\mathcal{A}:=\{\bar{q}\in\mathcal{U}|f(\bar{q})=f(q)\}$ consisting of only regular points.  Consider the trajectory geodesics of $\nabla f$ starting at the hypersurface $\mathcal{A}$  and pointing to one of its sides, without loss of generality to the side containing $p$. $\mathcal{A}$ contains a point $q_0$ realizing the distance
 $$s_0:=d(p,q_0)=d(p,\mathcal{A})>0.$$
EITHER there is a minimizing geodesic $\gamma_0$ joining $p$ and $q_0$ consisting only of regular points of $f$ (the case considering the possibility of existing a critical point of $f$ between $p$ and $q_0$ along $\gamma_0$ will be investigated in the rest of this proof). This realizes the distance between $\gamma_0(0)=p$ and $\gamma_0(s_0)=q_0$. By the Gauss lemma $\gamma_0$ meets $\mathcal{A}$ perpendicularly. Consequently by a discussion in the proof of Theorem \ref{improved 7.1} $\gamma_0$ is the same curve (up to parameterization) as the trajectory of $\nabla f$ through $q_0$. Any other point $q_1\in\mathcal{A}$ yields similarly a geodesic trajectory $\gamma_1$ of $\nabla f$. Let $\gamma_1(s_0)=q_1$. Then the claim is that $\gamma_1(0)=p$. To see this let $d_M$ and $d_{\mathcal{A}(s)}$ denote the distance functions in $M$ and the level set $\mathcal{A}(s)$ corresponding to the parameter $s$, respectively. Then for any $s>0$
\begin{align}
 d_M(\gamma_0(s), \gamma_1(s))&\leq d_{\mathcal{A}(s)}(\gamma_0(s), \gamma_1(s))\nonumber\\
 &=\frac{u(s)}{u(s_0)}d_{\mathcal{A}(s_0)}(\gamma_0(s_0), \gamma_1(s_0)).\label{u0}
\end{align}
For the last equality in \eqref{u0} we used the warped product metric according to Theorem \ref{improved 7.1}. Since the critical point $p$ is located at the origin of geodesic polar coordinates we have $u(0)=0$. It follows then
\begin{align}
 d_M(\gamma_0(0),\gamma_1(0))&=\lim_{s\to 0}d_M(\gamma_0(s),\gamma_1(s))\nonumber\\
 &\leq\lim_{s\to 0}\frac{u(s)}{u(s_0)} d_{\mathcal{A}}(\gamma_0(s_0),\gamma_1(s_0))=0\label{equal to 0}.
\end{align}
Therefore, $\gamma_1(0)=\gamma_0(0)=p$, and $\mathcal{A}$ is contained in the geodesic distance sphere with radius $s_0$ around $p$. On the other hand it follows that the arc length parameter on the trajectories is just the geodesic distance to $p$. Therefore $p$ is the only critical point in $\mathcal{U}$, and the $f$-levels coincide there with the geodesic distance spheres around $p$.\\\\
OR the same argument as above shows that  in a certain minimal distance $s_1$ $(<s_0)$ there are critical points, the same distance on each trajectory, and ultimately all trajectories pass through the same critical point $p_1$. Then there are only regular points between $p_1$ and $q$. This implies that in an open neighborhood $p_1$ is surrounded by non-critical level sets of $f$ (all diffeomorphic with the $(n-1)$-sphere), so this critical point $p_1$ is also isolated. But by connectedness there can not be two critical points $p$ and $q_1$ at the same side of $\mathcal{A}$.
\end{proof}
\begin{mycor}
\label{at most two}
 The same procedure on the other side of $\mathcal{A}$ shows that either there is no critical point or there is precisely one other critical point $p'$ with the same properties. In combination this seems to show that three or more critical points are impossible if $\mathcal{A}$ is connected (compare Theorem \ref{aroundcritical} and Theorem \ref{Petersen22.2}). If $\mathcal{A}$ is not connected one has the same situation for each component separately.
\end{mycor}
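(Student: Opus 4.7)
The plan is to apply the argument of Lemma \ref{isolated critical} symmetrically on both sides of the regular level hypersurface $\mathcal{A}$. Lemma \ref{isolated critical} already controls one side: following the $\nabla f$-trajectories emanating from $\mathcal{A}$ toward $p$, the warped product decomposition $g = dt^2 + u^2(t) g_N$ of Theorem \ref{improved 7.1}, together with the Gauss lemma and the collapse estimate
\[
d_M(\gamma_0(s), \gamma_1(s)) \le \frac{u(s)}{u(s_0)}\, d_{\mathcal{A}(s_0)}(\gamma_0(s_0), \gamma_1(s_0)),
\]
forces every such trajectory to terminate at the same isolated critical point.

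Next I would rerun the entire argument on the opposite side of $\mathcal{A}$, taking as trajectories the integral curves of $-\nabla f$ starting at $\mathcal{A}$. Nothing in the derivation of the collapse estimate depended on the choice of side: Theorem \ref{improved 7.1} still produces a local warped product along each regular trajectory, Corollary \ref{as far as} still equates $u(t_0)=0$ with $f'(t_0)=0$, and the pinching limit $d_M(\gamma_0(0),\gamma_1(0)) = \lim_{s\to 0}(u(s)/u(s_0))\, d_{\mathcal{A}(s_0)}(\gamma_0(s_0),\gamma_1(s_0)) = 0$ still collapses all trajectories to a single point. Thus on this second side one obtains the stated dichotomy: either no critical point is encountered, or exactly one additional isolated critical point $p'$ appears, again surrounded by spherical level sets.

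Assuming $\mathcal{A}$ is connected, suppose for contradiction that $f$ has three distinct critical points in the combined domain built out of the two sides of $\mathcal{A}$. The pigeonhole principle then places two of them on the same side of $\mathcal{A}$, which directly contradicts the single-critical-point conclusion above. Hence at most two critical points can exist. If $\mathcal{A}$ is disconnected, the entire argument runs componentwise: each component $\mathcal{A}_j$ is itself a regular hypersurface to which Theorem \ref{improved 7.1} and Lemma \ref{isolated critical} apply, so one gets at most one critical point on each side of every $\mathcal{A}_j$ separately.

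The main obstacle I anticipate is the bookkeeping of domains of validity rather than the dichotomy itself: the warped product structure of Theorem \ref{improved 7.1} is guaranteed only on the open set of regular points, so the two-sided analysis must be patched at each critical endpoint through the neighborhoods supplied by Lemma \ref{isolated critical}. In particular, one has to check that the vanishing $u(t_0)=0$ at a critical parameter is compatible with the warped structure approaching from the opposite side, so that the global hypersurface $\mathcal{A}$ really does separate the domain into exactly two pieces; this delicate matching is presumably why the statement is hedged with ``seems to show'' and why the cleaner global forms are deferred to Theorem \ref{aroundcritical} and Theorem \ref{Petersen22.2}.
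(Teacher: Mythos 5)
Your proposal is correct and follows essentially the same route as the paper, which offers no separate proof for this corollary: the statement itself is the argument, namely that rerunning the trajectory-collapse procedure of Lemma \ref{isolated critical} on the opposite side of $\mathcal{A}$ yields at most one critical point per side, hence at most two in total when $\mathcal{A}$ is connected, and componentwise otherwise. Your added remarks on the pigeonhole step and on patching the warped product structure at critical endpoints are consistent with how the paper later handles these issues in Theorem \ref{aroundcritical} and Theorem \ref{Petersen22.2}.
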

\begin{mytheo}
\label{aroundcritical}
 Let $m>1$, $\lambda\in\mathbb{R}$ and $(M,g)$ be a Riemannian manifold with a smooth function $f$ defined on $M$. Then the following conditions are equivalent:\\
 
 1) $(M,g,f)$ is conformally flat and satisfies Equation \eqref{hess} of a non-trivial $(\lambda$,n+m)-Einstein metric in a neighborhood of $p\in M$ with $\nabla f|_p=0$.\\
 
 2) There exist polar coordinates 
 $(t,t_1,...,t_{n-1})\in I\times S^{n-1}(1)$, $0\in I\subseteq\mathbb{R}$ being an open interval, in a neighborhood of 
 $p$ and an odd function $u=u(t)$, i.e. $u(0)=u^{(even)}(0)=0$, with $u(t)>0$ on $t\in I-\{0\}$ and $0\neq (u')^2(0)=k$, such that in these coordinates $f=f(t)$ and  
\begin{align}
 g=dt^2+\frac{u^2(t)}{k}g_{S^{n-1}(1)}\label{criticalchar}
\end{align}
where $g_{S^{n-1}(1)}$ denotes the line element of the standard unit sphere $S^{n-1}(1)$; In addition, the conditions \eqref{equ1} and \eqref{equuu} in Proposition \ref{equ} hold.
\end{mytheo}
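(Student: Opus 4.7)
The plan is to prove the equivalence by combining the structural results of Section~3 with the geometric consequences of Lemma~\ref{isolated critical} at the critical point $p$, and then handling smoothness at $p$ separately.

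For $1)\Rightarrow 2)$, I first observe that conformal flatness of $g$ means $W\equiv 0$, so both the harmonicity of $W$ and $W(\nabla f,\cdot,\cdot,\nabla f)=0$ hold trivially. Lemma~\ref{isolated critical} then yields a neighborhood $\mathcal{U}$ of $p$ in which $p$ is the unique critical point of $f$ and the level sets of $f$ coincide with the geodesic distance spheres around $p$. I would then introduce geodesic polar coordinates $(t,t_1,\dots,t_{n-1})\in I\times S^{n-1}(1)$ centered at $p$ and apply Theorem~\ref{improved 7.1} on the punctured neighborhood $\mathcal{U}\setminus\{p\}$ (where $\nabla f\neq 0$) to obtain a warped product decomposition $g=dt^2+\tilde u^2(t)g_N$ with $(N,g_N)$ Einstein and $f=f(t)$. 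Conformal flatness, combined with the last sentence of Theorem~\ref{7.222}, upgrades $g_N$ to a metric of constant sectional curvature; together with the fact that the $t$-level sets are geodesic spheres diffeomorphic to $S^{n-1}$, this forces $g_N=\frac{1}{k}g_{S^{n-1}(1)}$ for a constant $k>0$. The ODE conditions \eqref{equ1} and \eqref{equuu} then follow from Proposition~\ref{equ} applied on $I\setminus\{0\}$.

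The principal obstacle is extracting the correct behavior of $u$ at $t=0$, namely $u(0)=0$, $u$ odd (equivalently $u^{(\text{even})}(0)=0$), and $(u'(0))^2=k\neq 0$. For this I would invoke the classical smoothness criterion for metrics in geodesic polar coordinates: in order that $dt^2+\phi^2(t)g_{S^{n-1}(1)}$ extend to a smooth Riemannian metric through the origin of the polar coordinates, $\phi$ must be the restriction to $[0,\infty)$ of a smooth odd function on an interval about $0$ satisfying $\phi'(0)=1$. Applying this criterion with $\phi=u/\sqrt{k}$ forces precisely the stated parity conditions on $u$ and $u'(0)=\sqrt{k}$, i.e.\ $(u'(0))^2=k$. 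The non-vanishing of $u'(0)$ is moreover compatible with Corollary~\ref{as far as}, since at $p$ one has both $u(0)=0$ and $f'(0)=0$, while $u>0$ on $I\setminus\{0\}$ and $f'\neq 0$ off $\{0\}$.

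For the converse $2)\Rightarrow 1)$, the parity hypothesis on $u$ and the normalization $(u'(0))^2=k$ ensure that $g$ extends to a smooth Riemannian metric at $t=0$. Proposition~\ref{equ} applied on $I\setminus\{0\}$, combined with smoothness of both sides of \eqref{hess} at $p$ by continuity, then yields that \eqref{hess} holds on the whole neighborhood. Local conformal flatness of $g=dt^2+(u^2/k)g_{S^{n-1}(1)}$ finally follows from the standard fact that a one-dimensional-base warped product over a space form is locally conformally flat, cf.\ \cite[16.26(i)]{B}, completing the argument.
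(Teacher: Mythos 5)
Your proposal is correct and follows essentially the same skeleton as the paper's proof: Lemma \ref{isolated critical} to isolate $p$ and identify the $f$-levels with geodesic distance spheres, Theorem \ref{improved 7.1} on the punctured neighbourhood, identification of $g_N$ with a round sphere, the smoothness criterion in geodesic polar coordinates for the oddness of $u$ and $(u'(0))^2=k$, and Proposition \ref{equ} for \eqref{equ1} and \eqref{equuu}. Two localized differences are worth recording. First, to pin down $g_N$ the paper does not quote the last clause of Theorem \ref{7.222}; it computes directly, via the Gauss equation and the limit $t\to 0$, that $Sec_N=(u'(0))^2$, rules out $Sec_N=0$ by the universal-cover argument for a manifold diffeomorphic to $S^{n-1}$, and then obtains $(u'(0))^2=k$ by multiplying \eqref{equ32} by $u^2(t)$ and letting $t\to 0$, using $f(0)\neq 0$ from Proposition \ref{constantboundary}. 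Your alternative (constant curvature from conformal flatness, then the normalization $\phi=u/\sqrt{k}$, $\phi'(0)=1$ from the polar-coordinate smoothness criterion) reaches the same conclusion and is arguably cleaner, but Theorem \ref{7.222} is stated only for points of $O$, so you should either check that the nearby regular points lie in $O$ or dispose of the case $\sigma_1=\sigma_2$ via Example \ref{einstein case}; the paper's direct computation sidesteps this. Second, in $2)\Rightarrow 1)$ you never verify $\nabla f|_p=0$, which is part of statement 1); the paper deduces it from $u(0)=0$ together with Lemma \ref{u and f r} (equivalently, smoothness of $\nabla f=f'(t)\partial_t$ at the origin of polar coordinates forces $f'(0)=0$). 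This is a one-line fix but should not be omitted.
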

\begin{proof}
1) $\Rightarrow$ 2) We consider a neighborhood $\mathcal{U}$ of $p$ such that $g|_{\mathcal{U}}$ be conformally flat. This provides $g$ with harmonic Weyl tensor and the property $W(\nabla f,.,.,\nabla f)=0$ on $\mathcal{U}$. Hence, by Lemma  \ref{isolated critical} we already know that (after restriction of $\mathcal{U}$ if necessary) $p$ is the only critical point of $f$ in $\mathcal{U}$. In addition, by Theorem \ref{improved 7.1} and Lemma \ref{isolated critical} we may introduce locally coordinates such that for $t\neq0$  
\begin{align}
g=dt^2+u^2(t)g_N\nonumber\\
f(t,x)=f(t),\enskip x\in N\label{bysmooth}
\end{align}
where $g_N$ is the induced metric on a regular level set $N$ of $f$. By smoothness of $f$ and $g$ it follows that the equations in \eqref{bysmooth} hold for $f$ and $g$ at the time $t=0$ as well.  
 
Let $X,Y$ be two orthonormal vectors in $M$ which are tangent to a level hypersurface $N=\{q|f(q)=t_0>0\}$ for sufficiently small $t_0$. By Equation \cite[Lemma 2.5.(16)]{KR3} the sectional curvatures $Sec$ resp. $Sec_N$ of the $(X,Y)$-plane in $(M,g)$ resp. $(N,g_N)$ satisfy
\begin{align*}
Sec&=g(R(X,Y)Y,X)\\
 &=g(R_N(X,Y)Y,X)-\frac{(u'(t_0))^2}{(u(t_0))^2}\\
 &=\frac{1}{(u^2(t_0))^2}(Sec_N-(u'(t_0))^2).
\end{align*}
 On the other hand, $g_N$ is independent of $t$ when it tends to zero, cf. the proof of Theorem \ref{improved 7.1}. Since $u(0)=0$, due to the fact that the critical point $p$ is located on the origin, it follows that  
$$0=\lim_{t\to0}(Sec_N-(u'(t))^2)=Sec_N-(u'(0))^2.$$
It implies that $(N,g_N)$ is a space of constant curvature $Sec_N=(u'(0))^2$, and hence either $Sec_N>0$ or $Sec_N=0$. We already know that $N$ is a geodesic distance sphere which is diffeomorphic to $S^{n-1}$, hence the case $Sec_N=0$ can not occur. Because in this case
$N$ will be flat with Euclidean space as its universal cover, while we know that the universal cover of any sphere is itself. Therefore $Sec_N>0$.

Consequently $(u'(0))^2>0$ and $g_N=\frac{1}{(u'(0))^2}g_{S^{n-1}(1)}$. Moreover using Proposition \ref{equ} via multiplying both sides of Equation \eqref{equ32} with $u^2(t)$ and then taking the limit while $t$ tends to 0 as well as noting Proposition
\ref{constantboundary}, which implies $f(0)\neq0$, one obtains $u'^2(0)=k$. Hence $g_N=\frac{1}{k}g_{S^{n-1}(1)}$. By our assumption the metric $g$ is everywhere smooth and has no singularity at $t=0$. Therefore using the same calculation as \cite[1.4.4]{P} we conclude that $u(t)$ is an odd function at $t=0$, i.e. $u^{(even)}(0)=0$ and Equation \eqref{criticalchar} is valid for all $t\geq0$ as the usual expression of the Euclidean metric in polar coordinates. Since $u(t)$ and $f(t)$ are continuous and since by assumption $(M,g,f)$ satisfies Equation \eqref{hess}, by Proposition \ref{equ}  the conditions \eqref{equ1} and \eqref{equuu} are satisfied.\\

2) $\Rightarrow$ 1) In order to see that $\nabla f|_p=0$ we use the assumption $u(0)=0$ together with Lemma \ref{u and f r}. In order to see that Equation \eqref{criticalchar} together with $f=f(t)$, $t\in I$, satisfies Equation \eqref{hess} of a $(\lambda,n+m)$-Einstein manifold in polar coordinates, one may apply Theorem \ref{improved 7.1} for all points except $t=0$. The oddness of the function $u(t)$, i.e. $u^{(even)}(0)=0$, and $0\neq u'^2(0)=k$ yields that the right hand side of \eqref{criticalchar} has no proper singularity at $t=0$. Thus by continuity Equation \eqref{hess} holds at $t=0$ as well. Moreover, since by assumption the function $u(t)$ in  \eqref{criticalchar} satisfies $u'(0)\neq0$ and $u^{(even)}(0)=0$, via similar calculations to the proof of \cite[Proposition 3.5]{KR3} we see that the local warped metric \eqref{criticalchar} is conformally flat. 
\end{proof}
Reminder: In Theorem \ref{aroundcritical}, if in addition $(M,g)$ is Einstein then Hess$f$ would be proportional to the metric $g$. In this situation there is already a characterization by \cite[Lemma 18]{KW} in terms of a local warped decomposition of $g$ with $f'(t)$ as the warping function. 
\section{Global characterization and some tips on Theorem \ref{correction}}
There are some points to be discussed on Theorem \ref{correction}. The first point: In Theorem \ref{correction} the authors assume the manifold is simply connected, while through the next result we see that ``simply connected'' is not needed in the formulation of the theorem. 

The second point: Under the assumptions of the theorem the number of critical points of $f$ can be at most two, cf. Corollary \ref{at most two} or Theorem \ref{Petersen22.2}. In particular the warped product structure is global, i.e. complete, if there are no critical points for $f$, see Corollary \ref{Petersen2}. 

The third point: It is necessary to show that the critical points of $f$ are isolated, cf. Lemma \ref{isolated critical}. Additionally, in order that the local warped product $g=dt^2+u^2(t)g_N$ on $M-\{\text{critical points of}$ $f\}$ can be extended smoothly to a metric on $M$ around critical ponts of $f$  we need that the warping function $u(t)$ be odd on the critical points of $f$, i.e. $u^{(even)}(\gamma_0)=0$ where $\gamma_0=t(q)$ with $\nabla f(q)=0$, as well as $u'(\gamma_0)\neq0$, cf. Theorem \ref{aroundcritical}. 

The fourth point: Although the relations between $u(t)$ and $f(t)$ are investigated for some specific cases such as $(0,n+m)$-Einstein and $(\lambda,2+m)$-Einstein metrics in \cite{HPW}, but for a   
$(\lambda,n+m)$-Einstein metric they are not in general formulated. To generally relate $u(t)$ and $f(t)$ in  the formulation of the theorem we may use \eqref{equ1}, \eqref{equuu} and \eqref{equ0} of Proposition \ref{equ}.  

The fifth point: By \cite[16.26(i)]{B} we may include the properties harmonicity of the Weyl tensor of $(M,g)$ and $W(\nabla f,.,.,\nabla f)=0$ in the characterization equivalence. In other words, these two properties may be moved from being part of the assumption into the equivalence relation of the characterization. This is due to \cite[16.26(i)]{B} which says that any warped product $g=dt^2+u^2(t)g_N$ with Einstein fibre $g_N$ has harmonic Weyl tensor and satisfies $W(\nabla f,.,.,\nabla f)=0$.  

Now considering all these points together we can restate the global statement \cite[Theorem 7.2]{HPW} as follows. Here $]($ means either $]$ or $($. 
\begin{mytheo}
\label{Petersen22.2}
Let $m>1$, $\lambda\in\mathbb{R}$, $(M,g)$ a connected Riemannian manifold and $f$ be a smooth function on $M$. Then the following conditions are equivalent:\\

1) $(M,g,f)$ is a non-trivial $(\lambda,n+m)$-Einstein metric with harmonic Weyl tensor and $W(\nabla f,.,.,\nabla f)=0$.\\

2) If we let $C:=\{q\in M: \nabla f(q)=0\}$, then $|C|\leq2$ and $(M\setminus C,g)$ is isometric with a warped product metric 
 \begin{align}
 &g=dt^2+u^2(t)g_N,\label{warpeded}\\
 &f=f(t)\label{ft}
 \end{align}
on $I\times N$ where $g_N=g_{S^{n-1}}$ and $g$ is conformally flat if $|C|>0$, otherwise $(N,g_N)$ is a complete Einstein Riemannian hypersurface of $(M,g)$ say with normalized scalar curvature $\varrho_N=k$. $I=[\alpha_0,\beta_0)]$ with $\alpha_0\in\mathbb{R}$ if there exists a point $q_0\in\partial M$ satisfying $f(q_0)=f(\alpha_0)=0$. $I=(\alpha_0,\beta_0)]$, $\alpha_0\in\mathbb{R}$ only if there exists a minimum point $q_0$ of $f$ satisfying  $f(q_0)=f(\alpha_0)$, and hence, $u=u(t)$ must be odd at $\alpha_0$, i.e. $u^{(even)}(\alpha_0)=0$, with $u'(\alpha_0)\neq0$. The same discussion holds for the right side of $I$ regarding the boundary of $M$ and critical points of $f$, except that when there is a critical point for $f$ then it must correspond to a maximum point of $f$. If neither of the cases happens for the left and right sides of $I$, then $I = (-\infty,\infty)$.\\  
The product $I\times N$ becomes complete if we add the set $C$ of critical points to it. In addition, $f=f(t)$ and $u=u(t)$ satisfy the equations \eqref{equ1}, \eqref{equuu} and \eqref{equ0} in Proposition \ref{equ}.
\end{mytheo}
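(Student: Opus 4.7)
The plan is to assemble the global statement from the local pieces already proved: Theorem \ref{improved 7.1} gives the warped structure around regular points, Lemma \ref{isolated critical} isolates critical points, Corollary \ref{at most two} bounds their number, and Theorem \ref{aroundcritical} handles the structure around any critical point. The key conceptual step for the direction 1) $\Rightarrow$ 2) is to promote these local decompositions to a single global warped decomposition on $M\setminus C$, where $C=\{q\in M:\nabla f(q)=0\}$.

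For 1) $\Rightarrow$ 2), first I would invoke Lemma \ref{isolated critical} to conclude that $C$ is discrete and Corollary \ref{at most two} (applied componentwise, using connectedness of $M$) to get $|C|\leq 2$. On the open set $M\setminus C$ every point is regular, so $|\nabla f|$ is a smooth nowhere-zero function that is locally constant on connected level sets by Remark \ref{p6}; hence the argument leading to \eqref{localclosed}--\eqref{distance0} shows that $|\nabla f|^{-1}df$ is closed on $M\setminus C$ and integrates, along the unit normal geodesic flow of $\nabla f/|\nabla f|$, to a smooth distance function $t$. Fix one connected component $N$ of a regular level set. Following the flow from $N$ in both directions gives a diffeomorphism $I\times N\to M\setminus C$ and, by the same ODE computation as in the proof of Theorem \ref{improved 7.1} (equation \eqref{independent01}), an isometry $g=dt^2+u^2(t)g_N$ with $f=f(t)$ and with $(N,g_N)$ Einstein. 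Since the construction only uses normal geodesics, the identification of fibres across different values of $t$ is global on $M\setminus C$, not merely local.

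Next I would identify the interval $I$ by classifying each endpoint. A trajectory of $\nabla f/|\nabla f|$ can only terminate in finite $t$-time at a point of $\partial M$ or at a critical point (otherwise it extends by completeness of $(M,g)$). If it meets $\partial M$, Proposition \ref{constantboundary} gives $\nabla f\neq 0$ there, hence the endpoint is closed, $\alpha_0\in\mathbb{R}$, and $f(\alpha_0)=0$ by the definition of $(\lambda,n+m)$-Einstein; equation \eqref{equ0} then follows from Proposition \ref{equ}. If it meets a critical point, Theorem \ref{aroundcritical} forces the endpoint to be open, forces $g_N=\frac{1}{k}g_{S^{n-1}(1)}$ with $k=(u'(\alpha_0))^2\neq 0$, forces $u$ to be odd at $\alpha_0$ (i.e.\ $u^{(\mathrm{even})}(\alpha_0)=0$), and forces local conformal flatness; since $f'$ has constant sign along the flow, the critical point must be a minimum on the left end and a maximum on the right end. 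If neither a boundary point nor a critical point is reached on a given side, completeness forces $t\to\pm\infty$ on that side, yielding the case $I=(-\infty,\infty)$. The equations \eqref{equ1} and \eqref{equuu} are then Proposition \ref{equ} on $M\setminus C$, and the smooth extension across $C$ is guaranteed by the oddness condition from Theorem \ref{aroundcritical}.

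The converse 2) $\Rightarrow$ 1) is comparatively soft: Proposition \ref{equ} shows that \eqref{hess} holds on $I\times N\cong M\setminus C$, and the smoothness/oddness conditions imposed at any critical endpoint allow one to extend \eqref{hess} across $C$ by continuity as in the proof of Theorem \ref{aroundcritical}. The harmonic Weyl condition and $W(\nabla f,\cdot,\cdot,\nabla f)=0$ come from \cite[16.26(i)]{B} since $(N,g_N)$ is Einstein (whether a generic Einstein hypersurface, or the round sphere when $|C|>0$). The main obstacle, as I see it, is not any single step but the bookkeeping at the endpoints: one must verify that the local endpoint dichotomy (boundary vs.\ critical vs.\ infinite) interacts correctly with the warped-product parametrization so that no extra possibilities arise and so that the bracket-parenthesis notation $I=[\alpha_0,\beta_0)]$ is justified in every case. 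This is handled by the three-way case analysis above, using Proposition \ref{constantboundary} to exclude critical points on $\partial M$, Lemma \ref{isolated critical} to exclude accumulation of critical points, and the sign of $f'$ to pin down minima vs.\ maxima.
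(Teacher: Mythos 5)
Your proposal follows essentially the same route as the paper's own proof: isolate the critical points via Lemma \ref{isolated critical}, propagate the local warped decomposition of Theorem \ref{improved 7.1} along the normal geodesic flow to all of $M\setminus C$, classify each endpoint of $I$ as boundary (via Proposition \ref{constantboundary}), critical (via Theorem \ref{aroundcritical}, giving oddness of $u$ and the min/max dichotomy), or infinite, and obtain the converse from Proposition \ref{equ} together with \cite[16.26(i)]{B}. The argument is correct and matches the paper's structure step for step.
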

\begin{proof}
1) $\Rightarrow$ 2): By Lemma \ref{isolated critical}, $C$ is a set of isolated points. For every fixed point $q\in M\setminus C$, by Theorem \ref{improved 7.1} there is an open neighborhood $\mathcal{U}\ni q$ in which equations \eqref{warpeded} and \eqref{ft} hold. Where $g_N$, $N := \{x\in M|f(x)=f(q)\}$, is an Einstein Riemannian hypersurface of $(M,g)$ say with normalized scalar curvature 
$k=\varrho_N$. The hypersurface $(N,g_N)$ is complete as every Cauchy sequence in $N$ converges in $M$. Accordingly we have $\mathcal{U}=(\alpha,\beta)\times N$. The trajectory through $q$ is the unique geodesic with tangent $\frac{\partial}{\partial t}$. By completeness this is defined for every parameter $t$ as far as it does not hit the boundary $\partial M$. 

We define $\alpha_0$ and $\beta_0$ to be the infimum and supremum of $\alpha$, $\beta$ such that \eqref{warpeded} holds for $(\alpha,\beta)\times N$. Here the extension to $(\alpha_0,\beta_0)\times N$ is  
regardless of whether or not the points belong to the set $O$. In fact, it continues as long as the points lie in regular level sets of $f$, cf. Theorem \ref{improved 7.1}. Moreover, a similar discussion as in Theorem \ref{improved 7.1} implies that $f(t,x)=f(t)$ on $(\alpha_0, \beta_0)\times N$ where $x\in N$. 

If $\alpha_0$ (or $\beta_0$) is finite then there is a limit point $q_0$ on this geodesic with $f(q_0)=f(\alpha_0)$ (or $f(\beta_0)$). If $q_0$ is boundary point i.e. $f(q_0)=f(\alpha_0)=0$ (or $f(q_0)=f(\beta_0)=0$) by Proposition \ref{constantboundary} it follows that $\nabla f(q_0)\neq0$. Thus Equation \eqref{distance0} in the proof of Proposition \ref{improved 7.1} implies that $t$ would be also smooth at  $\alpha_0$ (or $\beta_0$). Therefore by smoothness of $g$, $f$ and $t$ equations \eqref{warpeded} and \eqref{ft} are valid at $\alpha_0$ (or $\beta_0$) as well. In addition, since $\{\alpha_0\}\times N$ (or $\{\beta_0\}\times N$) is a component of $\partial M$ by completeness we have $f^{-1}((-\infty,\beta_0)])=f^{-1}((\alpha_0,\beta_0)])$ (or $f^{-1}([(\alpha_0,\infty))=f^{-1}([(\alpha_0,\beta_0))$). 

If $q_0$ is not a boundary point then it must be a critical point of $f$ because otherwise by the argument in Theorem \ref{improved 7.1} Equation \eqref{warpeded} could be extended to a neighborhood of $q_0$ which is a contradiction. Furthermore $q_0$ is a minimum (a maximum) of $f$, because by Theorem \ref{aroundcritical} $u'(q_0)\neq0$ and hence Hess $f$ is definite at $q_0$. By connectedness no other critical points can occur, cf. the proof of Lemma \ref{isolated critical}. Hence the number of critical points of $f$ is at most two, i.e. $|C|<2$; compare also Corollary \ref{at most two}. Moreover, by our assumption the metric $g$ and $f$ are everywhere smooth and have no singularity at $\gamma_0=\alpha_0$. Hence through the same calculation as in \cite[1.4.4]{P} it follows that $u(t)$ is an odd function at $\gamma_0=\alpha_0$ (or $\beta_0$), i.e. $u^{(even)}(\gamma_0)=0$, where $\gamma_0\in\mathbb{R}$ and $u'(\gamma_0)\neq0$ and moreover equations \eqref{warpeded} and \eqref{ft} are valid for all $t\in(\alpha_0,\beta_0)$ and for $\alpha_0\in\mathbb{R}$ (or $\beta_0\in\mathbb{R}$).

$I\times N$ becomes complete if we add the set $C$ of critical points of $f$ to it. More explicitly, when $|C|=0$ then $g$ would be global, i.e. complete. If $|C|=1$ then $g$ would be complete through addition the lone critical point which is the minimum of level $f(\alpha_0)$ (or the maximum of level $f(\beta_0)$). If $|C|=2$ then $g$ becomes complete by adding both the minimum of level $f(\alpha_0)$ and the maximum of level  $f(\beta_0)$.

Moreover, due to smoothness of $g$, $f$ and $t$ as well as oddness of $u(t)$ at finite $\gamma_0=\alpha_0$ (or $\gamma_0=\beta_0$), i.e. $u^{(even)}(\gamma_0)=0$,  the functions $u=u(t)$ and $f=f(t)$ satisfy the equations \eqref{equ1},  \eqref{equuu} and \eqref{equ0} in Proposition \ref{equ}.\\

2) $\Rightarrow$ 1): By assumption $(M\setminus C,g)$ is isometric with $dt^2+u^2(t)g_N$ and $f(t,x)=f(t)$ on $I\times N$; $x\in N$ where $I$ and $N$ are as described. Also, by assumption even derivatives of $u(t)$ vanish, i.e. $u(t)$ is odd, at finite end points of the interval $I$ when they correspond to a critical point of $f$. Therefore, the warped product metric extends smoothly  to a metric on $M$. 

Since by assumption $I\times N$ becomes complete after addition with the set $C$ of critical points of $f$, we conclude that the metric $g$ is complete as well.

By assumption, $u(t)$ and $f(t)$ satisfy equations \eqref{equ1},  \eqref{equuu} and \eqref{equ0} in Proposition \ref{equ} on $I$. On the other hand $u(t)$ is odd on the critical points of $f$. Therefore, by Proposition \ref{equ} it follows that $(M,g,f)$ is a $(\lambda,n+m)$-Einstein manifold. 

As $g_N$ is Einstein by \cite[16.26(i)]{B}, together with the smoothness of the metric in \eqref{warpeded} and oddness of $u(t)$ at critical points of $f$, we conclude that $(M,g)$ has harmonic Weyl tensor and satisfies $W(\nabla f,.,.,\nabla f)=0$. 
\end{proof}
\begin{mycor}
	\label{Petersen2}
	Let $m>1$, $\lambda\in\mathbb{R}$ and $(M,g)$ be a connected Riemannian manifold (without boundary) on which a smooth function $f$ is defined. Then the following conditions are equivalent:\\
	
	1) $(M,g,f)$ is a non-trivial $(\lambda,n+m)$-Einstein metric with harmonic Weyl tensor and $W(\nabla f,.,.,\nabla f)=0$ and where $f$ has no critical point.\\
	
	2) $g=dt^2+u^2(t)g_N$ on $\mathbb{R}\times N$ where $(N,g_N)$ is a complete Einstein Riemannain hypersurface of $(M,g)$ say with normalized scalar curvature $\varrho_N=k$, in addition $f=f(t):\mathbb{R}\rightarrow\mathbb{R}^+$ which together with $u=u(t):\mathbb{R}\rightarrow\mathbb{R}^+$ satisfies \eqref{equ1} and \eqref{equuu} in Proposition \ref{equ}.
\end{mycor}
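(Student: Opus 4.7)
The plan is to deduce Corollary \ref{Petersen2} as a direct specialization of Theorem \ref{Petersen22.2} to the case where $\partial M = \emptyset$ and the critical set $C := \{q \in M : \nabla f(q) = 0\}$ is empty. The key observation is that, in the conclusion of Theorem \ref{Petersen22.2}, the finite endpoints of the parameter interval $I$ can arise only from a boundary component of $M$ or from a critical point of $f$; ruling out both possibilities forces $I = (-\infty,\infty) = \mathbb{R}$.

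For the implication $1) \Rightarrow 2)$, I would apply Theorem \ref{Petersen22.2} directly. The hypothesis gives $C = \emptyset$ (in particular $|C| \leq 2$), so the warped decomposition $g = dt^2 + u^2(t)g_N$ holds on $(M \setminus C, g) = (M, g)$. As noted above, neither $\alpha_0$ nor $\beta_0$ can be finite under our assumptions, so the parameter $t$ ranges over all of $\mathbb{R}$, and $(N,g_N)$ is a complete Einstein Riemannian hypersurface of $(M,g)$ with some normalized scalar curvature $\varrho_N = k$. Positivity of $f$ on the interior of a $(\lambda,n+m)$-Einstein manifold, combined with $\partial M = \emptyset$, gives $f: \mathbb{R} \to \mathbb{R}^+$; positivity of $u$ on all of $\mathbb{R}$ then follows from Corollary \ref{as far as}, since $f$ has no critical points. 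Finally, equations \eqref{equ1} and \eqref{equuu} are supplied by the conclusion of Theorem \ref{Petersen22.2}, while \eqref{equ0} is vacuous as $\partial M = \emptyset$.

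For the converse $2) \Rightarrow 1)$, I would invoke Theorem \ref{Petersen22.2} in the reverse direction with $I = \mathbb{R}$, $C = \emptyset$. Since $u(t) > 0$ on all of $\mathbb{R}$, Corollary \ref{as far as} forces $f'(t) \neq 0$ everywhere, so $f$ has no critical points; the condition $|C| \leq 2$ holds trivially, the product $\mathbb{R} \times N$ is already complete (with $N$ complete), and equations \eqref{equ1} and \eqref{equuu} are assumed. Theorem \ref{Petersen22.2} then returns the non-trivial $(\lambda, n+m)$-Einstein structure together with harmonic Weyl tensor and $W(\nabla f, \cdot, \cdot, \nabla f) = 0$; non-triviality is automatic since $f$ is non-constant in $t$.

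No substantial obstacle arises: this is essentially a bookkeeping consequence of the main global theorem. The only mildly delicate points are verifying that the simultaneous absence of boundary components and of critical points forces $I = \mathbb{R}$ in the forward direction, and invoking Corollary \ref{as far as} to translate between the positivity of $u$ and the absence of critical points of $f$ in the reverse direction.
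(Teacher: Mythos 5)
Your derivation is correct and is exactly the route the paper intends: the corollary is stated without proof as the immediate specialization of Theorem \ref{Petersen22.2} to $\partial M=\emptyset$ and $C=\emptyset$, which is precisely what you carry out. The only cosmetic point is in the converse direction: Corollary \ref{as far as} presupposes the $(\lambda,n+m)$-Einstein structure, so the deduction ``$u>0$ everywhere $\Rightarrow f'\neq0$ everywhere'' should be invoked \emph{after} Theorem \ref{Petersen22.2} has returned condition 1), not before -- a reordering that does not affect the argument.
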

\bibliography{bib}
\bibliographystyle{amsalpha}
\end{document}